\newtheorem{theorem}{Theorem}[section]
\newtheorem{question}{Question}
\newtheorem{lemma}{Lemma}[section]
\newtheorem{obs}{Observation}[section]
\newtheorem{claim}{Claim}[section]
\begin{document}
\begin{frontmatter}

%% Title, authors and addresses

%% use the tnoteref command within \title for footnotes;
%% use the tnotetext command for theassociated footnote;
%% use the fnref command within \author or \address for footnotes;
%% use the fntext command for theassociated footnote;
%% use the corref command within \author for corresponding author footnotes;
%% use the cortext command for theassociated footnote;
%% use the ead command for the email address,
%% and the form \ead[url] for the home page:
%% \title{Title\tnoteref{label1}}
%% \tnotetext[label1]{}
%% \author{Name\corref{cor1}\fnref{label2}}
%% \ead{email address}
%% \ead[url]{home page}
%% \fntext[label2]{}
%% \cortext[cor1]{}
%% \affiliation{organization={},
%%             addressline={},
%%             city={},
%%             postcode={},
%%             state={},
%%             country={}}
%% \fntext[label3]{}
\title{Characterizing optimal monitoring edge-geodetic sets for some structured graph classes}
%Efficient computation of optimal monitoring edge-geodetic sets in structured graphs

%% use optional labels to link authors explicitly to addresses:
%% \author[label1,label2]{}
%% \affiliation[label1]{organization={},
%%             addressline={},
%%             city={},
%%             postcode={},
%%             state={},
%%             country={}}
%%
%% \affiliation[label2]{organization={},
%%             addressline={},
%%             city={},
%%             postcode={},
%%             state={},
%%             country={}}
\author[inst2]{Florent Foucaud\footnote{Research of Florent Foucaud partially funded by the IDEX-ISITE initiative CAP 20-25 (ANR-16-IDEX-0001), the International Research Center ``Innovation Transportation and Production Systems'' of the I-SITE CAP 20-25, and the ANR project GRALMECO (ANR-21-CE48-0004)}}
\ead{florent.foucaud@uca.fr}

\author[inst1]{Arti Pandey\footnote{Research of Arti Pandey is supported by CRG project, Grant Number-CRG/2022/008333, Science
and Engineering Research Board (SERB), India}}
\ead{arti@iitrpr.ac.in}

\author[inst1]{Kaustav Paul\corref{mycorrespondingauthor}}
\cortext[mycorrespondingauthor]{Corresponding author}
\ead{kaustav.20maz0010@iitrpr.ac.in}

\affiliation[inst2]{%%organization={Department One},%Department and Organization
            addressline={Université Clermont Auvergne, CNRS, Clermont Auvergne INP, Mines Saint-Étienne, LIMOS}, 
            city={63000, Clermont-Ferrand},
            country={France}}
\affiliation[inst1]{%%organization={Department One},%Department and Organization
            addressline={Department of Mathematics, Indian Institute of Technology Ropar}, 
            city={Rupnagar},
            postcode={140001}, 
            state={Punjab},
            country={India}}

%\author[inst2]{}

%\affiliation[inst2]{%Department and Organization
            %addressline={The Institute of Mathematical Sciences}, 
            %city={Chennai},
            %postcode={600113}, 
            %state={Tamil Nadu},
            %country={India}}

%%Graphical abstract

%%Research highlights

%% \linenumbers

\begin{abstract}
    Given a graph $G=(V,E)$, a set $S\subseteq V$ is said to be a monitoring edge-geodetic set if the deletion of any edge in the graph results in a change in the distance between at least one pair of vertices in $S$. The minimum size of such a set in $G$ is called the monitoring edge-geodetic number of $G$ and is denoted by $meg(G)$.

    In this work, we compute the monitoring edge-geodetic number efficiently for the following graph classes: distance-hereditary graphs, $P_4$-sparse graphs, bipartite permutation graphs, and strongly chordal graphs. The algorithms follow from structural characterizations of the optimal monitoring edge-geodetic sets for these graph classes in terms of \emph{mandatory vertices} (those that need to be in every solution). This extends previous results from the literature for cographs, interval graphs and block graphs.
\end{abstract}

\begin{keyword}
%% keywords here, in the form: keyword \sep keyword
Monitoring edge-geodetic Set \sep Bipartite Permutation graph \sep Distance-Hereditary graph \sep $P_4$-sparse graph \sep Strongly Chordal graph

\end{keyword}
\end{frontmatter}

\section{Introduction}
The realm of network monitoring entails a range of applications focused on detecting and addressing periodic failures within a network. Within this framework, a network is considered a finite simple graph, where failures may manifest as connection losses between two nodes of the network. Such failures often lead to temporary increases in distances or the isolation of specific nodes from the broader network. Ideally, consistent monitoring of the entire graph would promptly reveal any instances of malfunction. Nevertheless, developing a methodology that achieves this objective with minimized cost is more advantageous.

In a recent study, Foucaud et al. \cite{DBLP:conf/caldam/FoucaudNS23} introduced a new graph parameter based on the distances between a specific group of vertices, motivated by applications in network monitoring. This concept arises from merging the classic concept of \emph{geodetic sets}~\cite{harary1993geodetic} and the more recent one of \emph{distance edge-monitoring sets}~\cite{foucaud2022monitoring}. An edge of a graph $G=(V,E)$ is said to be \emph{monitored} by a pair of vertices $a,b\in V$, if every shortest $a-b$ path contains $e$. An edge $e$ of a graph $G=(V,E)$ is said to be \emph{monitored} by a set $S\subseteq V$, if $e$ is monitored by at least one pair of vertices in $S$. A monitoring edge-geodetic set of $G$, abbreviated as MEG set of $G$, refers to a subset of the vertex set $V$, that monitors all the edges of $G$. The objective is to identify an MEG set that possesses the smallest possible size. The \emph{monitoring edge-geodetic number} (or MEG number) of $G$, denoted as $meg(G)$, represents the minimum size of an MEG set of a graph $G$. The optimization and decision versions of the problem are defined as follows:

\medskip

\noindent\underline{\textsc{Minimum Monitoring Edge-Geodetic Set} problem (MIN-MEG)}
\begin{enumerate}
  \item[] \textbf{Instance}: A graph $G=(V,E)$.
  \item[] \textbf{Solution}: An MEG set $M$ of $G$ with $\vert M\vert=meg(G)$.
\end{enumerate}

Our goal is to continue the study of these computational problems on structured graph classes. To do so, we will use the following concept: Given a graph $G=(V,E)$, we define the notion of \emph{mandatory vertex} as follows: a vertex $v\in V$ is \emph{mandatory} in $G$ if it is contained in every MEG set of $G$. We also define the set $Man(G)=\{v\in V~\vert~v$ is mandatory in $G\}$.

\subsection{Related work}
The monitoring edge-geodetic set problem was introduced by Foucaud et al. \cite{DBLP:conf/caldam/FoucaudNS23} in 2023, see also~\cite{DBLP:journals/corr/abs-2210.03774}. In their work, they computed the monitoring edge-geodetic number for some fundamental graph classes such as trees, cycles, unicyclic graphs, complete graphs, grids, and hypercubes. Additionally, using the feedback edge set number of a graph $G$, they derived an upper bound for $meg(G)$, which was subsequently refined in~\cite{DBLP:conf/fct/ChakrabortyFH23}. In the same paper, the authors presented several examples of graphs that satisfy $meg(G)=n$ (where $n$ is the order of $G$), and asked whether it is possible to characterize the class of graphs for which $meg(G)=n$. The first NP-hardness proof for the MIN-MEG problem for general graphs was proposed by Haslegrave \cite{DBLP:journals/dam/Haslegrave23}. In that paper, the author also produced some bounds on the MEG number for the certain product operations of two graphs, $G$ and $H$, in terms of $meg(G)$ and $meg(H)$. See also~\cite{DBLP:journals/paapp/XuYBZS24} for further work on MEG on graph products, and~\cite{doi:10.1142/S0219265924500105,DBLP:journals/paapp/MaJYL24,tan2023monitoring,doi:10.1142/S0219265924500014} for MEG on various graph families. In \cite{DBLP:journals/corr/abs-2403-09122,DBLP:conf/caldam/FoucaudMMSST24}, Foucaud et al. characterized the graphs of order $n$ for which $meg(G)=n$, answering the question asked in \cite{DBLP:journals/corr/abs-2210.03774,DBLP:conf/caldam/FoucaudNS23}, by characterizing mandatory vertices. Indeed, when $meg(G)=n$, all vertices must be mandatory. They also provided an upper bound for the MEG number in terms of girth for sparse graphs, and showed that the MIN-MEG problem is NP-hard for the family of $3$-degenerate, $2$-apex graphs, improving the existing result of \cite{DBLP:journals/dam/Haslegrave23}. Recently, Bil{\`{o}} et al, have shown that no polynomial time $(c\log{n})$ (where $c<\frac{1}{2}$) factor approximation algorithm exists for the MIN-MEG problem, unless $P=NP$ \cite{DBLP:conf/ictcs/BiloCF024}. In \cite{foucaud2024algorithmscomplexitymonitoringedgegeodetic}, Foucaud et al. have shown that the problem cannot be solved in subexponential time even for $3$-degenerate graphs unless the Exponential time hypothesis (ETH) fails. They also show that the MIN-MEG problem is APX-hard for $4$-degenerate graphs. Complementing these hardness results, they also prove that the problem admits a polynomial-time algorithm for interval graphs, a FPT algorithm for general graphs with clique-width plus diameter as the parameter, and a FPT algorithm for chordal graphs with treewidth as the parameter. They also provide an approximation algorithm with factor $\ln{m} \cdot meg(G)$ and $\sqrt{n \ln{m}}$ for the MIN-MEG problem (where $n$ and $m$ denote the number of vertices and edges of the input graph).

\subsection{Our results}
By Theorem \ref{th:cut}, we can conclude that the set of cut-vertices is not part of any minimum MEG set of $G$. Hence, it follows that no minimum-size MEG set of a graph $G$ includes any cut-vertex.   We define $Cut(G)=\{v\in V~\vert~v$ is a cut-vertex of $G\}$. Then, for any minimum-size MEG set $M$, we have $M\subseteq V\setminus Cut(G)$. The undefined terms in this section are defined in Section $2$. As shown in~\cite{DBLP:journals/corr/abs-2403-09122,DBLP:conf/caldam/FoucaudMMSST24}, sometimes the (unique) minimum-size MEG set of $G$ is $V\setminus Cut(G)$. Hence, the following observation holds:

\begin{obs}\label{observation:2}
    Given a graph $G=(V,E)$, let $M$ be a minimum-size MEG set of $G$. Then, $Man(G)\subseteq M\subseteq V\setminus Cut(G)$. Hence, $\vert Man(G)\vert\leq meg(G)\leq \vert V\setminus Cut(G)\vert$.
\end{obs}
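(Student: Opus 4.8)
The plan is to observe that the entire statement reduces to establishing the chain of inclusions $Man(G)\subseteq M\subseteq V\setminus Cut(G)$ by unfolding the relevant definitions together with Theorem \ref{th:cut}, after which the two cardinality bounds follow purely from the monotonicity of set cardinality under inclusion. So I would split the argument into the two inclusions and then read off the inequalities.

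First I would establish the left-hand inclusion $Man(G)\subseteq M$. Since $M$ is a minimum-size MEG set of $G$, it is in particular an MEG set of $G$. By the very definition of $Man(G)$, every mandatory vertex is contained in \emph{every} MEG set of $G$; applying this to the specific MEG set $M$ immediately yields $Man(G)\subseteq M$. Next I would establish the right-hand inclusion $M\subseteq V\setminus Cut(G)$, which is exactly the content of the discussion preceding the statement: by Theorem \ref{th:cut} no cut-vertex belongs to any minimum-size MEG set, so $M\cap Cut(G)=\emptyset$, i.e.\ $M\subseteq V\setminus Cut(G)$.

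Finally, combining the two inclusions gives $Man(G)\subseteq M\subseteq V\setminus Cut(G)$, and since $\vert M\vert=meg(G)$ by the minimality of $M$, monotonicity of cardinality under inclusion yields $\vert Man(G)\vert\leq\vert M\vert=meg(G)$ on the one hand and $meg(G)=\vert M\vert\leq\vert V\setminus Cut(G)\vert$ on the other, which is precisely the asserted pair of inequalities. The only substantive ingredient is Theorem \ref{th:cut} (the exclusion of cut-vertices from minimum MEG sets), which we are permitted to assume here; everything else is a direct consequence of the definitions. Consequently I do not expect any genuine obstacle in the argument, and the main point worth stating explicitly is simply that the minimum-size MEG set $M$ plays a dual role—being an MEG set forces $Man(G)\subseteq M$, while being of minimum size forces $M\subseteq V\setminus Cut(G)$.
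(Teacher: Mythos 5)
Your proof is correct and takes essentially the same route as the paper: the inclusion $Man(G)\subseteq M$ is definitional, the inclusion $M\subseteq V\setminus Cut(G)$ is the cut-vertex exclusion the paper derives from Theorem~\ref{th:cut} in the discussion immediately preceding the observation, and the cardinality bounds follow by monotonicity of cardinality under inclusion. Like the paper, you treat the exclusion of cut-vertices from minimum-size MEG sets as a given consequence of Theorem~\ref{th:cut}, which is exactly how the observation is justified in the text.
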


\medskip
\noindent By Observation~\ref{observation:2}, we can conclude that several graph classes can be classified as one of the following two types:

\begin{enumerate}
    \item the graphs for which $Man(G)$ is an optimal MEG set
    \item even stronger, the graphs for which $Man(G)=V\setminus Cut(G)$.
\end{enumerate}

\noindent Note that for a given graph $G=(V,E)$, if $G$ is a graph of type $1$, then $meg(G)$ can be computed in polynomial time, since the necessary and sufficient condition for a vertex to be a mandatory vertex, mentioned in Theorem~\ref{th:mandatory}, can be checked in polynomial time. If $G$ is of type $2$, then we have an even simpler linear time algorithm to compute $meg(G)$. Hence, if $meg(G)$ attains either of the upper and lower bound mentioned in Observation~\ref{observation:2}, it can be computed in polynomial time as well. It was shown in~\cite{foucaud2024algorithmscomplexitymonitoringedgegeodetic} that interval graphs are of type 1, and in~\cite{DBLP:journals/corr/abs-2403-09122,DBLP:conf/caldam/FoucaudMMSST24} that block graphs, proper interval graphs, cographs and well-partitioned chordal graphs are of type 2.

We extend and strengthen some of these results by considering more general graph classes, as follows. 
%}The rest of the paper is organized as follows:

\begin{itemize}
    \item In Section \ref{sec:distance-hereditary}, we show that for any distance-hereditary graph $G$, $Man(G)=V\setminus Cut(G)$, which implies that $meg(G)=\vert Man(G)\vert=\vert V\setminus Cut(G)\vert$.
    \item In Section \ref{sec:P4sparse}, we show that for any $P_4$-sparse graph $G$, $meg(G) = \vert Man(G)\vert$.
    \item In Section \ref{sec:bip_per}, we show that for any bipartite permutation graph $G$, $Man(G)=V\setminus Cut(G)$, which implies that $meg(G)=\vert Man(G)\vert=\vert V\setminus Cut(G)\vert$.
    \item In Section \ref{sec:strong_chordal}, we show that for any strongly chordal graph $G$, $meg(G)=\vert Man(G)\vert$. 
    \item Section \ref{sec:conclusion} concludes our work.

\end{itemize}
Hence, for all the above graph classes, the monitoring edge-geodetic number can be computed in polynomial time. In Figure~\ref{fig:diagram}, we show a hierarchy tree of graph classes to signify the improvements achieved by our results over the existing literature.

\begin{figure}[h]
\centering
\scalebox{0.7}{\begin{tikzpicture}[node distance=7mm]

\tikzstyle{mybox}=[fill=white,line width=0.5mm,rectangle, minimum height=.8cm,fill=white!70,rounded corners=1mm,draw];%[rectangle, minimum height=.8cm,fill=white!70,rounded corners=3mm,draw]
\tikzstyle{myedge}=[line width=0.5mm]
\newcommand{\tworows}[2]{\begin{tabular}{c}{#1}\\{#2}\end{tabular}}
\tikzstyle{ours}=[line width=1mm]
\tikzstyle{man}=[fill=gray!20]
\tikzstyle{cut}=[fill=gray!50]

    \node[mybox] (chordal) {chordal};
    \node[mybox] (cocomp) [right =of chordal] {co-comparability};
    \node[mybox] (circle) [right =of cocomp, xshift=23mm] {circle};
    \node[mybox, man, ours] (strongly) [below =of chordal,yshift=3mm] {strongly chordal} edge[myedge] (chordal);
    \node[mybox, cut] (wpchordal) [left =of strongly,xshift=-3mm] {well-partitioned chordal \cite{DBLP:journals/corr/abs-2403-09122,DBLP:conf/caldam/FoucaudMMSST24}} edge[myedge] (chordal);
    \node[mybox, cut] (split) [below =of wpchordal] {split  \cite{DBLP:journals/corr/abs-2403-09122,DBLP:conf/caldam/FoucaudMMSST24}} edge[myedge] (wpchordal);
    \node[mybox, man] (interval) [below =of cocomp,yshift=-13mm] {interval  \cite{foucaud2024algorithmscomplexitymonitoringedgegeodetic}} edge[myedge] (strongly) edge[myedge] (cocomp);
    \node[mybox] (perm) [below right=of cocomp,yshift=1mm, xshift=5mm] {permutation} edge[myedge] (cocomp) edge[myedge] (circle);
    \node[mybox, cut] (proper) [below=of interval,yshift=3mm] {proper interval \cite{DBLP:journals/corr/abs-2403-09122,DBLP:conf/caldam/FoucaudMMSST24}} edge[myedge] (interval);
    \node[mybox, cut, ours] (bperm) [below =of perm] {bipartite permutation} edge[myedge] (perm);
    \node[mybox, cut, ours] (dh) [right=of perm] {distance-hereditary} edge[myedge] (circle);
    \node[mybox, man, ours] (P4) [right=of dh] {$P_4$-sparse};
    \node[mybox, cut] (cograph) [right=of bperm] {cograph \cite{DBLP:journals/corr/abs-2403-09122,DBLP:conf/caldam/FoucaudMMSST24}} edge[myedge] (perm)  edge[myedge] (dh) edge[myedge] (P4);
    \node[mybox, cut] (block) [below =of strongly,yshift=1mm]%[below=of ptolemaic,yshift=0mm]
    {block \cite{DBLP:journals/corr/abs-2403-09122,DBLP:conf/caldam/FoucaudMMSST24}} edge[myedge] (strongly) edge[myedge] (wpchordal);
    \node[mybox, cut] (tree) [below=of block,yshift=3mm] {trees \cite{DBLP:conf/caldam/FoucaudNS23}} edge[myedge] (block);
   % \node[mybox, fill=green!30] (threshold) [below=of interval,yshift=-15mm] {threshold} edge[myedge] (cograph) edge[myedge] (interval) edge[myedge] (split) edge[myedge] (ptolemaic);
  \end{tikzpicture}}

\caption{Inclusion diagram for graph classes mentioned in this paper (and related ones). If a class $A$ has an upward path to class $B$, then $A$ is included in $B$. For any graph $G$ in a dark gray class, we have $\vert Man(G)\vert=\vert V(G)\setminus Cut(G)\vert=meg(G)$. For any graph $G$ in a light gray class, we have $\vert Man(G)\vert=meg(G)$. Results for boxes with a thick border are proved in this paper.}
\label{fig:diagram}
\end{figure}

\section{Preliminaries}

\subsection{Notations and definitions}
This paper only considers simple, undirected, finite, and connected graphs with at least two vertices. Let $G=(V,E)$ be a graph; $n$ and $m$ are used to denote the cardinalities of $V$ and $E$ respectively. $N(v)$ stands for the set of neighbors of a vertex $v$ in $V(G)$. The number of neighbors of a vertex $v\in V$ defines its \emph{degree}, which is represented by the symbol $deg(v)$. The maximum degree of the graph $G$ is denoted by $\Delta(G)$; when the context is clear, we use only $\Delta$. Given a subset of vertices $U$ of $V$, the notation $deg_{U}(v)$ is used to represent the number of neighbors that a vertex $v$ has within the set $U$. Additionally, we use $N_{U}(v)$ to refer to the set of neighbors of vertex $v$ within $U$. A vertex $v\in V$ is said to be an \emph{pendant vertex} (resp. \emph{isolated vertex}) if $\vert N(v)\vert=1$ (resp. $\vert N(v)\vert=0$). The unique neighbour of a pendant vertex is called a \emph{stem}. A vertex $v\in V$ is said to be an \emph{isolated vertex} if $N(v)=\emptyset$. An edge $uv\in E$ is said to be an \emph{isolated edge} if $deg(u)=deg(v)=1$. A vertex $v$ is said to be a \emph{simplicial} vertex in $G$ if $N[v]$ induces a clique on $G$. 

A path $P = v_0v_1\ldots v_k$, is a sequence of distinct
vertices, such that $v_{i-1}v_i \in E$, where $1 \leq  i \leq k$ and $k \geq 2$. Such a path is called a path between $v_0$ and $v_k$. We denote $V(P) = \{v_0, v_1,\ldots, v_k\}$. The length of the path $P$ is $\vert V(P)\vert - 1$. Given a path $P$ and two vertices $x,y$ of $P$, the distance of $x$ and $y$ in $P$ is defined as $d_P(x,y)$ and the induced path $P$ on $x,y$ and all the intermediate vertices between $x$ and $y$ in $P$ is denoted as $P(x,y)$.

The \emph{join} of two graphs $G_{1}$ and $G_{2}$ refers to a graph formed by taking separate copies of $G_{1}$ and $G_{2}$ and adding edges joining every vertex in $V(G_{1})$ to each vertex in $V(G_{2})$. The symbol $\oplus$ denotes the join operation. Similarly, \emph{disjoint union} of two graphs $H_1$ and $H_2$ is the graph $H=(V(H_1)\cup V(H_2), E(H_1)\cup E(H_2))$. The disjoint union is denoted with the symbol $\cup$.

Two vertices $u$ and $v$ are called \emph{true twins} if $N_G[u] = N_G[v]$.
Similarly, $u$ and $v$ are called \emph{false twins} if $N_G(u) = N_G(v)$. Vertices $u$ and $v$ are called \emph{twins} if they are either true or false twins.

A vertex $v$ of a graph $G=(V,E)$ is said to be a \emph{universal vertex} if $N[v]=V$. A pair of nonadjacent vertices $\{u,u'\}$ of $G$ is said to be a \emph{universal pair} if every $x\in V\setminus\{u,u'\}$ is adjacent to both $u$ and $u'$. A set $A\subseteq V(G)$ is known to be a \emph{stable set} or an \emph{independent set} of $G$ if the graph induced on $A$, that is $G[A]$ contains no edge.

A graph $G=(V,E)$ is said to be $P_4$-sparse if the graph induced on any five vertices contains at most one $P_4$.  A \emph{spider} is a graph $G=(V,E)$, where $V$ admits a partition into three subsets $S,C$ and $R$ such that
\begin{itemize}
     \item $C=\{c_1,\ldots,c_l\}~(l\geq 2)$ is a clique.
    \item $S=\{s_1,\ldots,s_l\}$ is a stable set.
    \item Every vertex in $R$ is adjacent to every vertex in $C$ and nonadjacent to every vertex of $S$. The edges in $R$ are unrestricted.
\end{itemize}
More on $P_4$-sparse graphs and spider graphs can be found in \cite{DBLP:journals/dam/JamisonO92}. A spider $G(S,C,R)$ is said to be a 
\begin{itemize}
    \item \emph{thin spider} if for every $i\in \{1,\ldots,l\}$, $N_{C}(s_i)=\{c_i\}$
    \item \emph{thick spider} if for every $i\in \{1,\ldots,l\}$, $N_{C}(s_i)=C\setminus\{c_i\}$.
\end{itemize}

\begin{figure}[h!]
    \centering
    \begin{subfigure}{0.4\textwidth}
        \centering
        \includegraphics[width=0.5\linewidth]{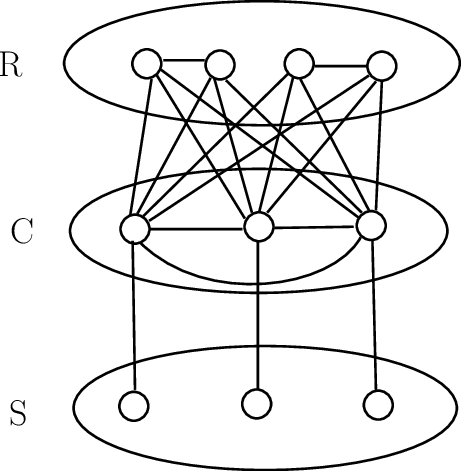}
    \caption{Thin spider}
    \label{subfig:1}
    \end{subfigure}
\hspace{0.5in}
    \begin{subfigure}{0.4\textwidth}
    \centering
        \includegraphics[width=0.5\linewidth]{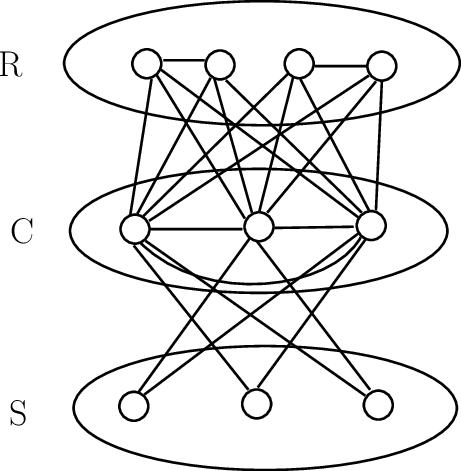}
    \caption{Thick spider}
    \label{subfig:2}
    \end{subfigure}
    \caption{Examples of spiders with spider partition $(S,C,R)$}
\label{fig:spi}
\end{figure}

A graph $G=(V,E)$ is said to be \emph{bipartite} if $V(G)$ can be partitioned into two independent sets $X$ and $Y$. A graph is a \emph{permutation graph} if and only if it has an intersection model consisting of straight lines (one per vertex) between two parallel lines. A graph that is both a bipartite and permutation graph is called a \emph{bipartite permutation} graph.

A graph $G=(V,E)$ is said to be \emph{chordal}, if there does not exist any induced cycle of length at least $4$ in $G$.  $G$ is \emph{strongly chordal} if it is a chordal graph and every cycle of even length (at least $6$) in $G$ has an odd chord, that is, an edge that connects two vertices which are an odd distance ($>1$) apart from each other in the cycle.

\emph{Distance-hereditary} graphs are graphs in which the distance between any two vertices in any connected induced subgraph is the same as their distance in the original graph.

\subsection{Results from previous literature}
The following results are from the existing literature on the problem and are used in the upcoming proofs.

\begin{theorem}[\cite{DBLP:conf/caldam/FoucaudMMSST24}]\label{th:mandatory}
    Given a graph $G=(V,E)$ and a vertex $v\in V$, $v$ is a mandatory vertex of $G$ if and only if there exists $u\in N(v)$ such that every induced $2$-path $uvx$ is part of a $4$-cycle.
\end{theorem}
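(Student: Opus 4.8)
The plan is to reduce the global statement about mandatory vertices to a purely local statement about a single edge. First I would record the elementary fact that $v$ is mandatory exactly when $V\setminus\{v\}$ fails to be an MEG set: since any superset of an MEG set is again an MEG set, $v$ lies in every MEG set iff no MEG set avoids $v$, iff $V\setminus\{v\}$ is not an MEG set. Next, observe that every edge $xy$ with $x,y\neq v$ is monitored by the pair $\{x,y\}$ itself, because adjacent vertices are joined by the unique shortest path of length one. Hence the only edges that $V\setminus\{v\}$ could fail to monitor are those of the form $uv$ with $u\in N(v)$, and therefore $v$ is mandatory iff there is some $u\in N(v)$ such that the edge $uv$ is not monitored by $V\setminus\{v\}$. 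This already explains the existential quantifier over $u\in N(v)$ in the statement, and reduces the theorem to the following local claim: the edge $uv$ is unmonitored by $V\setminus\{v\}$ if and only if every induced $2$-path $uvx$ lies on a $4$-cycle.

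To analyse monitoring of $uv$ by a pair $\{a,b\}\subseteq V\setminus\{v\}$, I would first argue that we may assume $a=u$. If a shortest $a$-$b$ path traverses $uv$ with $u$ on the $a$-side, then $d(u,b)=1+d(v,b)$, and any shortest $u$-$b$ path that avoids $uv$ could be prepended to the shortest $a$-$u$ subpath to produce a shortest $a$-$b$ path avoiding $uv$; so if $\{a,b\}$ monitors $uv$ then so does $\{u,b\}$. Consequently $uv$ is monitored by $V\setminus\{v\}$ precisely when there is some $b\neq v$ for which every shortest $u$-$b$ path begins with the edge $uv$.

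With this reformulation, both directions of the local claim are short. For the easy direction, suppose some induced $2$-path $uvx$ is \emph{not} part of a $4$-cycle; equivalently, $v$ is the only common neighbour of $u$ and $x$. Since $u\not\sim x$ but $d(u,x)=2$, the path $u-v-x$ is then the unique shortest $u$-$x$ path, so $\{u,x\}$ monitors $uv$ and the edge is monitored. For the converse, assume every induced $2$-path $uvx$ lies on a $4$-cycle, fix any $b\neq v$, and take any shortest $u$-$b$ path that uses $uv$; its first three vertices $u,v,x$ form an induced $2$-path (otherwise $u\sim x$ would give a shortcut), which by hypothesis sits on a $4$-cycle $u-v-x-w-u$. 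Replacing the initial segment $u-v-x$ by $u-w-x$ yields a walk of the same length that avoids $uv$, and minimality of the original path guarantees that $w$ does not recur later, so this is again a shortest $u$-$b$ path. Hence no $b$ can force all shortest $u$-$b$ paths through $uv$, and the edge is unmonitored.

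I expect the main obstacle to be the reduction step in the second paragraph: justifying that monitoring of $uv$ by an arbitrary pair can be reduced to a pair with $u$ as an endpoint, and, in the rerouting argument, verifying that the substituted path is genuinely a simple shortest path rather than merely a walk of equal length. Both rely on the standard but easy-to-overlook fact that on a shortest path no vertex recurs and no chord can shorten a subpath; once these are handled carefully, the $4$-cycle condition drops out exactly as the obstruction to rerouting around $v$.
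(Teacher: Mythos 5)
The paper itself contains no proof of Theorem~\ref{th:mandatory}: it is imported verbatim from \cite{DBLP:conf/caldam/FoucaudMMSST24} and used as a black box, so there is no in-paper argument to compare yours against. Judged on its own merits, your proof is correct and is essentially the natural (and, as far as the cited source goes, the standard) argument. Each link in your chain holds: (i) $v$ is mandatory iff $V\setminus\{v\}$ is not an MEG set, since supersets of MEG sets are MEG sets; (ii) every edge with both endpoints in $V\setminus\{v\}$ is monitored by its own endpoints, so only the edges $uv$ with $u\in N(v)$ are in question; (iii) the normalization to pairs containing $u$ is sound --- all shortest $a$-$b$ paths traverse $uv$ in the same orientation, and concatenating the $a$-$u$ prefix with a hypothetical $uv$-avoiding shortest $u$-$b$ path gives a walk of length $d(a,b)$ from which one extracts a shortest $a$-$b$ path avoiding $uv$; (iv) if some induced $2$-path $uvx$ has no $4$-cycle, then $v$ is the unique common neighbour of $u$ and $x$, so $uvx$ is the unique shortest $u$-$x$ path and $\{u,x\}$ monitors $uv$; (v) conversely, the rerouting of $u$-$v$-$x$ through the fourth vertex $w$ of a $4$-cycle produces a path (not merely a walk), precisely because $w$ recurring later on the original shortest path would yield a shortcut via the edge $uw$. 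The two pitfalls you flagged (walk versus path in step (iii), and recurrence of $w$ in step (v)) are exactly the right ones, and you resolve both correctly. The only cosmetic remark is the degenerate case in step (v): for a given $b$ there may be no shortest $u$-$b$ path through $uv$ at all (e.g., when $v$ is a pendant vertex and the $4$-cycle condition holds vacuously), in which case the desired $uv$-avoiding shortest path exists trivially; it is worth one explicit sentence, but it is not a gap.
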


\noindent We also propose the following observation, which follows directly from Theorem~\ref{th:mandatory}.

\begin{obs}\label{observation:1}
    Let $u,v\in V$, if $N_G(u)\subseteq N_G(v)$ or $N_G[u]\subseteq N_G[v]$, then $u\in M$, for every MEG set $M$ of $G$. In particular, every simplicial vertex is a mandatory vertex.
\end{obs}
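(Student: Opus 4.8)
The plan is to derive both containment cases directly from the characterization of mandatory vertices in Theorem~\ref{th:mandatory}, applied to the vertex $u$: I will exhibit a neighbor $w\in N(u)$ for which every induced $2$-path $wux$ lies on a $4$-cycle. I will treat the two hypotheses separately, and throughout I assume $u\neq v$ (otherwise the containments are trivially true and carry no information).

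First I would handle the closed-neighborhood case $N_G[u]\subseteq N_G[v]$. Since $u\in N_G[u]\subseteq N_G[v]$ and $u\neq v$, we get $v\in N(u)$, so the natural candidate is $w=v$. I claim there is in fact \emph{no} induced $2$-path of the form $vux$: such a path would require some $x\in N(u)$ with $x\not\sim v$ and $x\neq v$, but $x\in N(u)\subseteq N_G[u]\subseteq N_G[v]$ forces $x\sim v$ or $x=v$, a contradiction. Hence the hypothesis of Theorem~\ref{th:mandatory} is satisfied vacuously, and $u$ is mandatory.

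Next I would treat the open-neighborhood case $N_G(u)\subseteq N_G(v)$. Here I first observe that $u\not\sim v$, for otherwise $v\in N(u)\subseteq N(v)$, contradicting the absence of loops. Now pick any $w\in N(u)$ (nonempty, since $G$ is connected with at least two vertices), and consider an arbitrary induced $2$-path $wux$, so $w\sim u$, $u\sim x$, $w\not\sim x$, and $w\neq x$. Because $w,x\in N(u)\subseteq N(v)$, both $w$ and $x$ are adjacent to $v$, while $u\not\sim v$; moreover $v\notin N(u)$ gives $v\neq w$ and $v\neq x$, so $w,u,x,v$ are four distinct vertices and $wuxvw$ is a $4$-cycle containing the path $wux$. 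Since this works for every induced $2$-path through the fixed $w$, Theorem~\ref{th:mandatory} again yields that $u$ is mandatory.

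Finally, the simplicial case reduces to the closed-neighborhood one: if $N[u]$ is a clique, I pick any $v\in N(u)$, and then every $z\in N[u]$ is adjacent to $v$ or equal to $v$, so $N_G[u]\subseteq N_G[v]$ and the first argument applies. I expect no serious obstacle: the only point requiring care is the explicit $4$-cycle construction together with the distinctness verification in the open-neighborhood case, whereas the closed-neighborhood and simplicial cases are pure vacuity arguments.
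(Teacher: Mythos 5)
Your proof is correct and follows exactly the route the paper intends: the paper states Observation~\ref{observation:1} without proof as a direct consequence of Theorem~\ref{th:mandatory}, and your argument simply fills in that derivation (vacuous satisfaction of the characterization in the closed-neighborhood case, an explicit $4$-cycle $wuxvw$ in the open-neighborhood case, and reduction of the simplicial case to closed-neighborhood containment). Your observation that the statement implicitly requires $u\neq v$ is a sensible and necessary reading.
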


\noindent The next theorem is a corollary of the Theorem \ref{th:mandatory}.

\begin{theorem}[\cite{DBLP:journals/corr/abs-2210.03774,DBLP:conf/caldam/FoucaudNS23}]\label{th:man_twin}
    Given a graph $G=(V,E)$, if two vertices $u$ and $v$ are twins %(meaning $N(u)=N(v)$ or $N[u]=N[v]$)
    of degree at least $1$ in a graph $G$, then they both are mandatory vertices of $G$.
\end{theorem}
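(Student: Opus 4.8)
The plan is to split on the two kinds of twins—true twins ($N_G[u]=N_G[v]$) and false twins ($N_G(u)=N_G(v)$)—and in each case verify the mandatory criterion of Theorem~\ref{th:mandatory} for $v$, the argument for $u$ being identical by symmetry. I note that the fastest route simply observes that twins satisfy a neighbourhood-inclusion hypothesis of Observation~\ref{observation:1} in both directions, which already yields the claim; but since the statement is advertised as a corollary of Theorem~\ref{th:mandatory}, I would instead exhibit the required special neighbour and $4$-cycle structure explicitly.

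For the true-twin case, from $N_G[u]=N_G[v]$ I first deduce that $u$ and $v$ are adjacent, so $u$ is a legitimate candidate neighbour in Theorem~\ref{th:mandatory}. The key step is to show that \emph{no} induced $2$-path of the form $uvx$ exists: any $x\in N_G(v)\setminus\{u\}$ lies in $N_G[v]=N_G[u]$ and hence is adjacent to $u$, so $uvx$ is never induced. The universally quantified condition ``every induced $2$-path $uvx$ lies in a $4$-cycle'' is therefore vacuously true, and $v$ is mandatory.

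For the false-twin case, $N_G(u)=N_G(v)$ forces $u$ and $v$ to be non-adjacent, and here the hypothesis that $v$ has degree at least $1$ is exactly what I need: it lets me pick a neighbour $w\in N_G(v)=N_G(u)$ to play the role of the special neighbour. Given any induced $2$-path $wvx$, the endpoint $x\in N_G(v)=N_G(u)$ is adjacent to $u$, and $w$ is adjacent to $u$ as well, so $w,v,x,u$ close up into the $4$-cycle $w\,v\,x\,u$ containing the path. Thus the criterion of Theorem~\ref{th:mandatory} holds for $v$, and $v$ is mandatory.

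The only points requiring genuine care—and thus the main (mild) obstacle—are the distinctness checks: in the false-twin $4$-cycle one must confirm that $u,w,v,x$ are four distinct vertices, which follows because $w,x\in N_G(v)$ while $u\notin N_G(v)$ (as $u,v$ are non-adjacent false twins) and $u\neq v$ by hypothesis. Recognising that the true-twin case is satisfied \emph{vacuously} rather than by producing an actual $4$-cycle is the other subtlety; and noting that the degree-at-least-$1$ assumption is used precisely to supply the neighbour $w$ in the false-twin case explains why that hypothesis appears in the statement.
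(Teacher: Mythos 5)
Your proof is correct: the true-twin case (where the criterion of Theorem~\ref{th:mandatory} holds vacuously because no induced $2$-path $uvx$ exists) and the false-twin case (where the degree-at-least-$1$ hypothesis supplies the neighbour $w$ and the $4$-cycle $wvxu$ is verified, including the distinctness checks) are both sound. The paper itself gives no proof of this statement---it is cited from the literature and merely noted to be a corollary of Theorem~\ref{th:mandatory}---and your derivation is exactly that corollary (and, as you observe, it also follows immediately from Observation~\ref{observation:1}), so your approach matches the route the paper indicates.
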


\begin{theorem}[\cite{DBLP:conf/caldam/FoucaudMMSST24}]\label{th:man_pendant}
    Let $G=(V,E)$ be a graph with a path $v_0v_1\ldots v_{k-1}v_k$ whose internal vertices have degree $2$, $v_0$ has degree at least $2$, and $v_k$ has degree $1$. Then the vertices $v_0, v_1,\ldots, v_{k-1}$ are never part of any minimum-size MEG set.
\end{theorem}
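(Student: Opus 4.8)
The plan is to show that for every minimum-size MEG set $M$, the set $P=\{v_0,v_1,\dots,v_{k-1}\}$ satisfies $M\cap P=\emptyset$. I would argue by a deletion/contradiction scheme: assuming $M\cap P\neq\emptyset$, I would prove that $M':=M\setminus P$ is still an MEG set, which contradicts the minimality of $M$ because $|M'|<|M|$. Since $M$ is an arbitrary minimum-size MEG set, this yields the claim.

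Two preliminary facts set up the argument. First, the leaf $v_k$ is simplicial (its closed neighbourhood $\{v_{k-1},v_k\}$ induces a clique), so by Observation~\ref{observation:1} it is mandatory; hence $v_k\in M$ and $v_k\in M'$. Second, since $v_0$ has degree at least $2$ while the internal vertices $v_1,\dots,v_{k-1}$ have degree exactly $2$, the vertex $v_0$ has a neighbour $u$ lying off the path, and the edge $v_0u$ is a non-path edge. I would observe that the unique shortest path between any two vertices of the path is the path segment joining them, so a monitoring pair with both endpoints on the path can only monitor path edges. Consequently the edge $v_0u$ forces any MEG set, in particular $M$, to contain a vertex $w_0$ outside $\{v_0,\dots,v_k\}$; note $w_0\in M'$.

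The core of the proof is to verify that every edge of $G$ is monitored by some pair contained in $M'$. The key structural point is that, because the internal path vertices have degree $2$, any shortest path leaving the pendant path must ascend monotonically to $v_0$ before entering the rest of the graph. Thus for any $v_i\in P$ and any external vertex $b$, every shortest path from $v_i$ to $b$ is the concatenation of the forced segment $v_i v_{i-1}\cdots v_0$ with a shortest $v_0$-to-$b$ path; since this forced segment is contained in the corresponding segment for $v_k$, any edge monitored by the pair $(v_i,b)$ is also monitored by $(v_k,b)$. This \emph{replacement lemma} pushes all monitoring done by an internal vertex down onto the leaf $v_k\in M'$. Combined with the fact that $(v_k,w_0)$ monitors every path edge (the forced shortest path from $v_k$ to $w_0$ contains each edge $v_iv_{i+1}$), I would run the case analysis on a monitoring pair $(a,b)\subseteq M$ of an arbitrary edge $e$: if both endpoints lie on the path then $e$ is a path edge, covered by $(v_k,w_0)$; if exactly one endpoint $v_i$ lies in $P$ and the other is $v_k$ or external, then either the path-edge observation or the replacement lemma supplies a monitoring pair inside $M'$.

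The main obstacle I anticipate is organizing this case analysis so that no edge slips through, and in particular guaranteeing the existence of the external vertex $w_0\in M'$ so that the path edges stay monitored after deleting $P$, together with a careful justification of the \textbf{forced monotone ascent} that underlies the replacement lemma. Once these points are secured, the minimality of $M$ forces $M\cap P=\emptyset$, which is precisely the statement that $v_0,v_1,\dots,v_{k-1}$ belong to no minimum-size MEG set.
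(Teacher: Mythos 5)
Your proof is correct. The degree conditions imply that the only edges joining $\{v_0,\dots,v_k\}$ to the rest of $G$ are incident to $v_0$, so the segment between any two path vertices is the \emph{unique} path joining them; this validates all three pillars of your argument: pairs of path vertices monitor only path edges (which, applied to the edge $v_0u$, forces the external vertex $w_0\in M'$), every shortest path from $v_i$ to an external vertex $b$ is the forced segment $v_i v_{i-1}\cdots v_0$ followed by a shortest path from $v_0$ to $b$ (your replacement lemma), and the pair $(v_k,w_0)$ monitors every path edge. Together with the mandatoriness of the simplicial leaf $v_k$ (Observation~\ref{observation:1}), your case analysis is exhaustive and the deletion argument goes through, also in the boundary case $k=1$. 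Note, however, that this paper does not actually prove the statement (it is imported from prior work), and within the paper's own toolkit it is a one-line consequence of the cut-vertex machinery: each of $v_0,\dots,v_{k-1}$ is a cut-vertex of $G$, since removing it separates $v_k$ from the off-path neighbour $u$ of $v_0$, and by Theorem~\ref{th:cut} and Observation~\ref{observation:2} no minimum-size MEG set contains a cut-vertex. So your route is genuinely different: instead of invoking (or re-proving) the general principle that cut-vertices can be deleted from MEG sets one at a time, you delete all of $M\cap\{v_0,\dots,v_{k-1}\}$ in one stroke by showing that the mandatory leaf $v_k$ dominates the monitoring power of every path vertex. The cut-vertex route is shorter and more general, but your argument is elementary and self-contained (it needs nothing beyond Observation~\ref{observation:1}), and it actually establishes something slightly stronger: for \emph{every} MEG set $S$, not just minimum ones, $S\setminus\{v_0,\dots,v_{k-1}\}$ is again an MEG set, since minimality is used only at the final contradiction.
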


\begin{theorem}[\cite{DBLP:journals/corr/abs-2210.03774,DBLP:conf/caldam/FoucaudNS23}]\label{th:cut}
    Let $G$ be a graph with a cut-vertex $v$ and $C_1, C_2,\ldots , C_k$ be the $k$ components obtained when removing $v$ from $G$. If $S_1,S_2,\ldots, S_k$ are MEG sets of the induced subgraphs $G[C_1 \cup \{v\}],~G[C_2 \cup \{v\}],\ldots, G[C_k \cup \{v\}]$, then $S=(S_1 \cup S_2\cup \ldots S_k) \setminus \{v\}$ is an MEG set of G.
\end{theorem}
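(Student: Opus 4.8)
The plan is to verify directly that every edge of $G$ is monitored by some pair of vertices of $S$. I would first record two structural facts. (i) Every edge of $G$ lies in exactly one of the subgraphs $G_i := G[C_i \cup \{v\}]$, since $v$ is a cut-vertex and hence no edge joins two distinct components $C_i, C_j$. (ii) Distances are preserved: for $x,y \in C_i \cup \{v\}$ we have $d_G(x,y) = d_{G_i}(x,y)$, and the shortest $x$-$y$ paths in $G$ are exactly those in $G_i$. Fact (ii) holds because a path leaving $C_i \cup \{v\}$ would have to pass through $v$ twice, which is impossible for a simple path; thus every shortest path between two vertices of $C_i \cup \{v\}$ stays inside $G_i$.

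Given these, fix an arbitrary edge $e$ of $G$, lying in some $G_i$. Since $S_i$ is an MEG set of $G_i$, there is a pair $a,b \in S_i$ such that every shortest $a$-$b$ path in $G_i$ contains $e$. By fact (ii), the same pair monitors $e$ in $G$ whenever $a,b \neq v$; in that case $a,b \in S_i \setminus \{v\} \subseteq S$ and we are done for this edge.

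The only remaining (and crux) case is when one endpoint of the monitoring pair is the removed vertex, say $a = v$ and $b \in C_i$. Here I would replace $v$ by a vertex from another component. Since $v$ is a cut-vertex, there is at least one other component $C_j$ with $j \neq i$, and $v$ has a neighbour in $C_j$, so $G_j$ contains an edge; consequently $\vert S_j\vert \geq 2$ (a single vertex monitors no edge), and so $S_j$ must contain some vertex $c \in C_j \subseteq S$. I then claim that $c,b$ monitors $e$ in $G$. The key observation is that, because $v$ separates $C_j$ from $C_i$, every shortest $c$-$b$ path $P$ passes through $v$ exactly once and splits as $P = P_1 + P_2$, where $P_1$ is a shortest $c$-$v$ path inside $G_j$ and $P_2$ is a shortest $v$-$b$ path inside $G_i$ (the lengths add, and minimality forces each piece to be geodesic). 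Since $a=v$ and $b$ monitor $e$ in $G_i$, the segment $P_2$ --- being a shortest $v$-$b$ path in $G_i$ --- contains $e$, hence so does $P$. As $P$ was an arbitrary shortest $c$-$b$ path, $c$ and $b$ monitor $e$ in $G$.

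Combining the two cases, every edge of $G$ is monitored by a pair from $S$, so $S$ is an MEG set of $G$. I expect the main obstacle to be the crux case: one must argue both that a usable replacement vertex $c \in S$ exists (which relies on the size bound $\vert S_j\vert \geq 2$) and that $c$ genuinely takes over the monitoring role of $v$, which rests on the path-decomposition property through the cut-vertex.
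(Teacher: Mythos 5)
Your proof is correct and complete: the locality facts (i) and (ii), the bound $\vert S_j\vert\geq 2$ guaranteeing a replacement vertex, and the path-splitting argument in the crux case are all valid and together establish the statement as given. Your route differs in structure, though not in its key mechanism, from the cut-vertex proof contained in the paper. The paper proves a companion \emph{pruning} statement --- if $M$ is an MEG set of $G$ and $c\in M$ is a cut-vertex, then $M\setminus\{c\}$ is still an MEG set --- and does so by contradiction: assuming some edge $uv$, lying in $C_i\cup\{c\}$ for a component $C_i$ of $G-c$, is monitored only by $c$ and some $x\in C_i$, it takes a vertex $y$ of $M$ in a different component $C_j$ and observes that every shortest $x$--$y$ path passes through $c$, so its $x$--$c$ portion is itself a shortest $x$--$c$ path and hence contains $uv$; thus the pair $x,y$ already monitors $uv$, a contradiction. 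That is precisely your crux observation --- a vertex beyond the cut-vertex inherits its monitoring role because shortest paths concatenate through the cut-vertex --- but run entirely inside $G$ and in the contrapositive direction. This difference in framing is what each approach buys: the paper's argument never has to relate distances in $G$ to distances in the blocks $G[C_i\cup\{v\}]$, since everything happens in one graph, whereas your compositional argument starts from MEG sets of the subgraphs $G_i$ and must transfer monitoring upward to $G$; your facts (i) and (ii) are therefore not warm-up but genuinely necessary, and your explicit existence argument for the replacement vertex $c$ (via $\vert S_j\vert\geq 2$) fills a step the paper gets for free from $M$ being an MEG set of all of $G$. In exchange, your version directly yields the stated composition theorem, assembling a solution of $G$ from solutions of its blocks, rather than obtaining it indirectly through the pruning form.
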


\section{MEG for distance-hereditary graphs}
\label{sec:distance-hereditary}
In this section, we show that the MIN-MEG problem can be solved efficiently for distance-hereditary graphs. Throughout this section, we assume that $G=(V,E)$ is a connected distance-hereditary graph.

For any given ordering
$(v_1, v_2,\ldots,v_n)$ of $V$, let $G_i$ be the graph induced on $\{v_1, v_2,\ldots,v_i\}$. A \emph{one vertex-extension ordering} of G is an ordering $(v_1, v_2,\ldots,v_n)$ of $V$ such that $v_i$ is a leaf or a twin (true or false) of some vertex in $G_i$ for each $i$, $2 \leq i \leq n$. Hammer et al. \cite{DBLP:journals/dam/HammerM90} proved that a graph is distance-hereditary if and only if it has a one-vertex-extension
ordering. We show that for a distance-hereditary graph $G$, $V\setminus Cut(G)$ forms a minimum MEG set of $G$.

\begin{theorem}
    Given a connected distance-hereditary graph $G = (V, E)$, $Man(G)=V \setminus Cut(G)$ and thus, this set is a minimum MEG set of $G$, that can be computed in polynomial time.
\end{theorem}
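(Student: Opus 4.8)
The plan is to prove the two inclusions $V\setminus Cut(G)\subseteq Man(G)$ and $Man(G)\subseteq V\setminus Cut(G)$ separately. The second inclusion is essentially free: by Theorem~\ref{th:cut}, no cut-vertex belongs to any minimum-size MEG set, so no cut-vertex can be mandatory, giving $Man(G)\subseteq V\setminus Cut(G)$. (Equivalently, this is the upper bound already recorded in Observation~\ref{observation:2}.) The real content is the first inclusion: every non-cut-vertex of a distance-hereditary graph is mandatory. Once both inclusions are established, Observation~\ref{observation:2} sandwiches $meg(G)$ between $|Man(G)|$ and $|V\setminus Cut(G)|$, forcing equality throughout; and since mandatory vertices can be recognized in polynomial time via the criterion in Theorem~\ref{th:mandatory} (and cut-vertices in linear time), the resulting algorithm is polynomial.

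To prove that each non-cut-vertex $v$ is mandatory, I would use the characterization of Theorem~\ref{th:mandatory}: it suffices to exhibit a neighbor $u\in N(v)$ such that every induced $2$-path $uvx$ lies on a $4$-cycle. The natural engine here is the one-vertex-extension ordering $(v_1,\dots,v_n)$ guaranteed by Hammer and Maffray's theorem, processed from the last vertex backward. When $v=v_i$ is added as a \emph{twin} of some earlier vertex $w$, the pair $\{v,w\}$ gives a twin pair, and Theorem~\ref{th:man_twin} immediately makes $v$ mandatory. The delicate case is when $v_i$ is introduced as a \emph{leaf} (pendant vertex); a pure pendant vertex is simplicial, hence mandatory by Observation~\ref{observation:1}, but a vertex added as a leaf early in the ordering may later acquire many neighbors and become a cut-vertex or a genuine internal vertex, so the position in the ordering must be handled carefully rather than taken at face value.

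The key structural step I expect to carry the argument is a local analysis of the neighborhood of a non-cut-vertex $v$ in a distance-hereditary graph. Concretely, I would pick a suitable neighbor $u$ of $v$ and show, using the distance-hereditary property (no induced cycle of length $>4$ and no ``house/domino/gem''-type obstructions, equivalently the hereditary preservation of distances), that any induced path $uvx$ must close into a $4$-cycle through a common neighbor of $u$ and $x$. The candidate for $u$ should be a neighbor that witnesses the twin/leaf structure of the extension ordering; the distance-hereditary condition is what prevents the existence of an induced $2$-path from $v$ that cannot be completed to a $C_4$, because such a ``dangling'' configuration would either create a long induced cycle or force $v$ to separate the graph, contradicting that $v$ is not a cut-vertex.

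The main obstacle, and where the proof must be most careful, is verifying the $4$-cycle condition for \emph{every} induced $2$-path $uvx$ simultaneously for a single fixed $u$ — Theorem~\ref{th:mandatory} does not allow $u$ to depend on $x$. Thus the crux is choosing $u$ correctly and then arguing uniformly over all $x\in N(v)$ with $ux\notin E$. I anticipate splitting into the twin case (clean, via Theorem~\ref{th:man_twin}) and the leaf case, and in the leaf case reducing to the observation that a non-cut leaf-introduced vertex either is simplicial or has a twin inside its own neighborhood, each of which supplies the required mandatory-vertex certificate. Making this dichotomy exhaustive, and ruling out the borderline configurations using the induced-subgraph distance preservation, is the step that demands the most attention.
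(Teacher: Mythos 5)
Your overall frame (induction along the one-vertex-extension ordering, closing the argument with Observation~\ref{observation:2} and the polynomial-time check of Theorem~\ref{th:mandatory}) matches the paper, but the inductive content of your proposal has a genuine gap. The flaw is that the twin/leaf status of $v_i$ holds only in the prefix graph $G_i$, not in $G$: later additions can destroy it. You acknowledge this for leaves but then rely on insertion status anyway in the twin case, claiming Theorem~\ref{th:man_twin} ``immediately'' makes a twin-introduced vertex mandatory. That inference is invalid: take $C_4=abcd$ plus a pendant $e$ attached to $a$ (distance-hereditary, with valid ordering $a,b,d,c,e$ in which $c$ enters as a false twin of $a$); in the final graph $a$ and $c$ are no longer twins ($e\in N(a)\setminus N(c)$), so Theorem~\ref{th:man_twin} says nothing about $c$. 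The same graph also refutes your proposed repair for the leaf case: under the ordering $a,b,c,d,e$ the vertex $c$ enters as a leaf, is not a cut-vertex of $G$, is not simplicial ($N(c)=\{b,d\}$ is not a clique), and has no twin in $G$ --- so the dichotomy ``a non-cut leaf-introduced vertex is simplicial or has a twin'' is false, even though $c$ is in fact mandatory (witness $u=b$: the only induced $2$-path $bcd$ lies on the $4$-cycle $bcda$).

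What is missing is the actual engine of the paper's proof: an induction on $n$ in which only the \emph{last} vertex $v_{k+1}$ is handled via its insertion type (for $v_{k+1}$ the pendant/twin relation does hold in $G$ itself, so Theorem~\ref{th:man_twin} or pendant arguments apply legitimately), together with a \emph{persistence} step showing that for each non-cut vertex $v$ of $G_k$, its certificate neighbor $u_v$ from Theorem~\ref{th:mandatory} (every induced $2$-path $u_vvx$ lies on a $C_4$ in $G_k$) remains a valid certificate in $G$. Concretely, the only new induced $2$-paths at $v$ are of the form $u_vvv_{k+1}$; in the twin case this forces $v\in N(v_{k+1})$ and $u_v\notin N(v_{k+1})$, and the $4$-cycle $u_vvv_jx'$ guaranteed in $G_k$ transfers to the $4$-cycle $u_vvv_{k+1}x'$ in $G$ because $v_{k+1}$ and $v_j$ share neighborhoods; in the pendant case every new $2$-path is centered at $v_j$, which becomes a cut-vertex of $G$ and is therefore excluded. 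Your ``local analysis using distance-hereditary forbidden subgraphs'' paragraph gestures at proving the $C_4$-closure property directly, but no argument is given, and the uniform choice of $u$ over all $x$ (which you correctly identify as the crux) is exactly what this persistence induction supplies and your two concrete reductions do not.
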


\begin{proof}
    Our objective is to prove that every vertex $v\in V\setminus Cut(G)$ is mandatory in $G$. To prove this, we employ induction on $\vert V\vert$. The base case, $\vert V\vert=3$, holds true, as the only connected distance-hereditary graphs of order $3$ are $P_3$ and $C_3$, for both of them, $V\setminus Cut(G)= Man(G)$. Now let us assume that every $v\in V\setminus Cut(G)$ is contained in $Man(G)$, whenever $\vert V\vert\leq k$. 
    
    Let us consider a connected distance-hereditary graph $G=(V,E)$ with $\vert V\vert=k+1$, adhering to a one-vertex extension ordering $(v_1,v_2,\ldots,v_k,v_{k+1})$. As per the definition of a one-vertex extension ordering, $G$ is constructed from $G_k$ by appending $v_{k+1}$ as either a pendant or a twin (true or false) to some $v_j$, where $j\leq k$. We inspect two potential scenarios:

    \noindent\textbf{Case 1}: \textbf{$v_{k+1}$ is a true twin (or false twin) of $v_j$} (refer to Figure \ref{fig:true_twin}). 
    
    \begin{figure}[h!]
        \centering
        \includegraphics[scale=0.5]{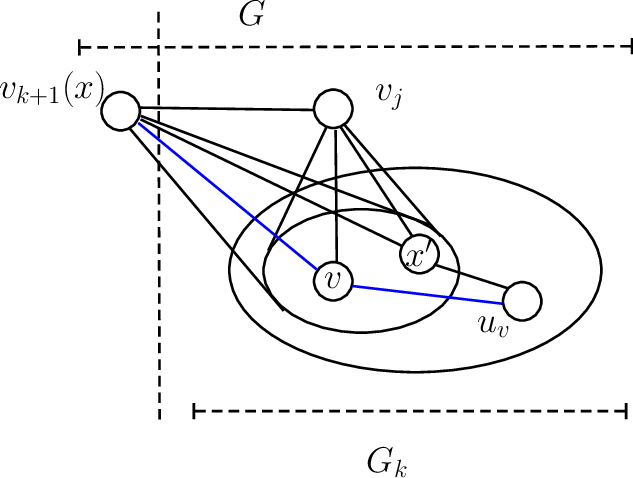}
        \caption{The mandatory vertices in $V(G_k)\setminus Cut(G_k)$ remain mandatory in $V(G)\setminus Cut(G)$}
        \label{fig:true_twin}
    \end{figure}
    
    \noindent Note that $Cut(G)=Cut(G_k)\setminus \{v_j\}$ and $V(G)\setminus Cut(G)=(V(G_k)\setminus Cut(G_k))\cup\{v_j,v_{k+1}\}$. We show that the set $V(G)\setminus Cut(G)$ is contained in every MEG set of $G$. Since $v_j,v_{k+1}$ are twins, they are contained in every MEG set of $G$, by Theorem \ref{th:man_twin}. Hence, $v_j,v_{k+1}\in Man(G)$. 

    \smallskip
    \noindent By the induction hypothesis, the set $V(G_k)\setminus Cut(G_k)$ is contained in every MEG set of $G_k$. This implies that for any $v\in V(G_k)\setminus Cut(G_k)$, there exists $u_v \in N_{G_k}(v)$ such that any induced $2$-path $u_vvx$ is part of a $4$-cycle in $G_k$. Next, we show that $v\in Man(G)$. For the sake of contradiction, assume $v\notin Man(G)$; this implies that there exists an induced $2$-path $u_vvx$ that is part of no $4$-cycle in $G$. But this implies that this $2$-path was not present in $G_k$ (the path highlighted in blue in Figure \ref{fig:true_twin}). Hence, $x=v_{k+1}$. This implies that $v\in N(v_{k+1})$ and $u_v\in N(v)\setminus N(v_{k+1})$; hence $v\in N(v_{k+1})\setminus\{v_j\}$. But note that $v$ was mandatory in $G_k$, which implies $u_vvv_j$ is part of a $4$-cycle $u_vvv_jx'$ in $G_k$ (refer to Figure \ref{fig:true_twin}). This implies $u_vvv_{k+1}$ is also part of the $4$-cycle $u_vvv_{k+1}x'$ in $G$, leading to a contradiction. Hence, $v\in Man(G)$.

     \smallskip
    \noindent Hence, following the addition of the true twin $v_{k+1}$ with $v_j$ in $G_{k}$, the vertices in $V(G_k)\setminus Cut(G_k)$, which possess the aforementioned property in $G_k$, retain said property in $G$. Hence, every vertex of $V(G)\setminus Cut(G)$ is contained in  $Man(G)$. Note that the proof of the case when $v_{k+1}$ is a false twin of $v_j$, is exactly analogous, hence omitted.

    \medskip
    \noindent\textbf{Case 2}: \textbf{$v_{k+1}$ is a pendant vertex adjacent to $v_j$}.

    \smallskip
    \noindent Note that $Cut(G)=Cut(G_k)\cup \{v_j\}$. Since $v_{k+1}$ is a pendant vertex, it is contained in $Man(G)$, by Theorem \ref{th:man_pendant}. By the induction hypothesis, $V(G_k)\setminus Cut(G_k)$ is contained in every MEG set of $G_k$. This implies that for every $v\in V(G_k)\setminus (Cut(G_k)\cup \{v_j\})$, there exists $u_v \in N(v)$ such that any induced $2$-path $u_vvx$ is part of a $4$-cycle. Hence, after the pendant addition of $v_{k+1}$ with $v_j$, the vertices in $V(G_k)\setminus (Cut(G_k)\cup \{v_j\})$, which holds the above property in $G_k$ also hold the property in $G$. This implies that $V(G)\setminus Cut(G)$ is contained in every MEG set of $G$.

    \medskip
    \noindent Hence, by induction, we have shown that $V\setminus Cut(G) \subseteq Man(G)$. Hence, $V\setminus Cut(G) = Man(G)$ and $V\setminus Cut(G)$ is a minimum MEG set of $G$.
\end{proof}

\section{MEG for $P_4$-sparse graphs}\label{sec:P4sparse}
In this section, we propose an efficient algorithm that solves the MIN-MEG problem for $P_4$-sparse graphs. The class of $P_4$-sparse graphs is an extension of the class of cographs. Below, we state a characterization theorem for $P_4$-sparse graphs from the previous literature.

\begin{theorem}[\cite{Defn_P4sparse}]\label{th:5}
    A graph $G$ is said to be $P_4$-sparse if and only if one of the following conditions hold
    \begin{itemize}
        \item $G$ is a single vertex graph.
        \item $G=G_1\cup G_2$, where $G_1$ and $G_2$ are $P_4$-sparse graphs.
        \item $G=G_1 \oplus G_2$, where $G_1$ and $G_2$ are $P_4$-sparse graphs.
        \item $G$ is a spider which admits a spider partition $(S, C, R)$ where either $G[R]$ is a $P_4$-sparse graph or $R=\emptyset$.
    \end{itemize} 
\end{theorem}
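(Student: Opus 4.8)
The plan is to prove both directions of the equivalence, treating the four constructions as closure operations. Throughout I will use two elementary facts. First, being $P_4$-sparse is hereditary: any induced subgraph of a $P_4$-sparse graph is again $P_4$-sparse, directly from the definition. Second, it is a self-complementary property, because complementation maps an induced $P_4$ to an induced $P_4$ (the complement of $P_4$ is again a $P_4$); hence on any fixed set of five vertices, $G$ and $\overline{G}$ contain exactly the same number of induced copies of $P_4$, so $G$ is $P_4$-sparse if and only if $\overline{G}$ is.

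For the direction asserting that each construction yields a $P_4$-sparse graph (sufficiency), I would argue constructor by constructor. The single-vertex case is vacuous. For $G = G_1 \cup G_2$, I would fix any five vertices and observe that an induced $P_4$, being connected, must lie entirely inside $G_1$ or entirely inside $G_2$; since $4+4>5$, at most one part can contain four of the five chosen vertices, so any induced $P_4$ on these five vertices lies in a single $G_i$ and hence, by the $P_4$-sparseness of $G_i$ (and the fact that four vertices induce at most one copy of $P_4$), there is at most one. The join case $G = G_1 \oplus G_2$ then follows immediately from the union case together with self-complementarity, since $\overline{G_1 \oplus G_2} = \overline{G_1} \cup \overline{G_2}$. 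The spider case is the only nontrivial part of this direction: here I would verify directly that among any five vertices of a spider with $P_4$-sparse part $G[R]$ there is at most one induced $P_4$, splitting into cases according to how many of the five vertices lie in $S$, in $C$, and in $R$, and exploiting that a leg $s_i$ attaches to $C$ in the rigid thin ($\{c_i\}$) or thick ($C \setminus \{c_i\}$) pattern while every vertex of $R$ sees all of $C$ and none of $S$.

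For the converse (necessity), I would invoke Gallai's modular decomposition. Given a $P_4$-sparse graph $G$ with at least two vertices, exactly one of three situations occurs: $G$ is disconnected, $\overline{G}$ is disconnected, or both $G$ and $\overline{G}$ are connected. In the first situation $G = G_1 \cup G_2$ with the $G_i$ induced subgraphs, hence $P_4$-sparse by heredity; in the second, dually, $G = G_1 \oplus G_2$. It therefore remains to treat the prime case in which $G$ is simultaneously connected and co-connected, and the goal is to prove that such a graph must be a spider (whose part $R$ is then automatically $P_4$-sparse by heredity).

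The main obstacle is exactly this prime case. Since a connected, co-connected graph on at least two vertices cannot be $P_4$-free, it contains an induced $P_4$, say $v_1 v_2 v_3 v_4$. The engine of the proof is the observation that, for any vertex $x$ outside this path, the five vertices $\{x, v_1, v_2, v_3, v_4\}$ may contain only the single $P_4$ we started from; this forbids all but a handful of the sixteen possible adjacency patterns of $x$ to the path. I would carry out this finite case analysis to classify every vertex as behaving either like an endpoint-type (stable) vertex, a middle-type (clique) vertex, or a vertex adjacent uniformly to the ``body'', and then use connectivity, co-connectivity, and primeness (the absence of nontrivial modules) to glue these local classifications into the global partition $(S, C, R)$ with $|S| = |C| = l \ge 2$, the legs matched to the body in a thin or thick fashion. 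Verifying that these local constraints propagate consistently across the whole graph, rather than only near one chosen $P_4$, is the delicate point and is where essentially all the combinatorial effort of the theorem resides.
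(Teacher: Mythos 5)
First, a point of comparison: the paper does not prove this statement at all. It is quoted verbatim from the literature (the citation \cite{Defn_P4sparse}, i.e.\ the known Jamison--Olariu/Ho\`ang characterization of $P_4$-sparse graphs), and the paper simply uses it as a black box. So there is no proof in the paper to measure yours against; your attempt has to stand on its own.

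Evaluated on its own, your proposal is a plan rather than a proof, and the plan has one genuine flaw beyond the admitted incompleteness. The sufficiency direction is in good shape for the single-vertex, union, and join cases (the heredity and self-complementarity observations are correct, and $\overline{G_1\oplus G_2}=\overline{G_1}\cup\overline{G_2}$ does reduce join to union), but the spider case is only promised, and that case analysis is not a formality. The more serious problem is in the necessity direction: you identify the remaining case as ``$G$ connected and co-connected'' and then propose to exploit \emph{primeness} (absence of nontrivial modules) to glue your local vertex classification into a spider partition. But connected plus co-connected does not imply prime, and the graphs you must handle in this case include precisely the spiders with $\vert R\vert\geq 2$, in which $R$ itself is a nontrivial module (every vertex of $C$ sees all of $R$, every vertex of $S$ sees none of it). So the primeness you want to invoke is simply not available; an argument that assumes it covers only a proper subfamily of the graphs in this case. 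The standard repair is to pass to Gallai's modular quotient: in the third case of modular decomposition the \emph{quotient} by the maximal modular partition is prime, one shows that this prime quotient of a $P_4$-sparse graph is a spider with $\vert R\vert\leq 1$, and one then lifts the spider structure back to $G$, the head module becoming the set $R$ (which is $P_4$-sparse by heredity). Your sketch skips exactly this quotient-and-lift step, and without it the ``gluing'' stage of your argument does not go through as described.
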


Hence, by Theorem \ref{th:5}, a graph that is $P_4$-sparse and contains at least two vertices can be classified as either a join or union of two $P_4$-sparse graphs or a particular type of spider (thick or thin). Consequently, in this section, we compute $meg(G)$ for each of the cases. First, we consider the case of $G$ being a spider.

\begin{lemma}\label{lem:thin}
    Let $G=(V,E)$ be a thin spider and $(S,C,R)$ be the spider partition. Then, $Man(G)=S\cup R$, which is an optimal MEG set of $G$.
\end{lemma}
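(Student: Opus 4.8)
\textbf{Proof plan for Lemma~\ref{lem:thin}.}

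The plan is to establish the two inclusions $S\cup R\subseteq Man(G)$ and $Man(G)\subseteq S\cup R$ separately, and then argue that $S\cup R$ is in fact an MEG set, so that it is both a lower bound (being contained in every MEG set) and an achievable solution, hence optimal. Recall that in a thin spider we have $N_C(s_i)=\{c_i\}$, so each $s_i$ is a pendant-like vertex whose unique neighbour in $C$ is $c_i$; in particular $N[s_i]=\{s_i,c_i\}$, making each $s_i$ simplicial. First I would show $S\subseteq Man(G)$: since every $s_i$ is simplicial, Observation~\ref{observation:1} immediately gives $s_i\in Man(G)$. For $R\subseteq Man(G)$, I would use Theorem~\ref{th:mandatory}, exhibiting for each $r\in R$ a neighbour $u$ (a natural candidate is some $c\in C$, since every vertex of $R$ is adjacent to all of $C$) such that every induced $2$-path $u\,r\,x$ closes into a $4$-cycle; the key structural fact to exploit is that the two clique vertices $c_1,c_2\in C$ (recall $l\ge 2$) are both adjacent to $r$ and to each other, giving an abundance of $4$-cycles through $r$.

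Next I would prove the reverse inclusion $Man(G)\subseteq S\cup R$ by showing that no clique vertex $c_i\in C$ is mandatory. The cleanest route is to verify that $C\cap Man(G)=\emptyset$ using the contrapositive of Theorem~\ref{th:mandatory}: for each $c_i$ and each neighbour $u$ of $c_i$, I must produce an induced $2$-path $u\,c_i\,x$ lying on no $4$-cycle. The pendant vertex $s_i$ is the natural witness here: $s_i$ is adjacent only to $c_i$, so for any $u\in N(c_i)\setminus\{s_i\}$ the path $u\,c_i\,s_i$ is induced (as $u\not\sim s_i$) and cannot lie on a $4$-cycle, because such a cycle would need a second neighbour of $s_i$, which does not exist. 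One must handle the case $u=s_i$ separately, and I would note that a $2$-path cannot start and end at the same kind of degree-one vertex; more carefully, since $s_i$ has degree $1$ it is not a valid interior-free endpoint producing a $4$-cycle, so $c_i$ fails the mandatory condition for every choice of $u$, giving $c_i\notin Man(G)$.

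Finally I would confirm that $S\cup R$ is genuinely an MEG set, i.e.\ that it monitors every edge of $G$. The edges fall into three types: the pendant edges $s_ic_i$, the clique edges $c_ic_j$, and the edges incident to (or inside) $R$. For each type I would name a monitoring pair drawn from $S\cup R$: the edge $s_ic_i$ is the unique shortest path between $s_i$ and any vertex whose only access to $s_i$ passes through $c_i$ (for instance $s_i$ paired with $s_j$ or with a vertex of $R$ forces $s_ic_i$); clique edges $c_ic_j$ are monitored since the shortest $s_i$--$s_j$ path is $s_i c_i c_j s_j$ and is unique; and edges touching $R$ are monitored by pairs within $R\cup S$ using the fact that $R$ vertices are cut off from $S$ through $C$. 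The main obstacle I anticipate is this last verification for the internal structure of $R$ and the $R$--$C$ edges: since $G[R]$ is arbitrary, I cannot assume shortest paths inside $R$ are unique, so I would lean on the overall distance structure of the spider (every $s_i$ sits at distance $2$ from all of $C\setminus\{c_i\}$ and at controlled distance from $R$) to force uniqueness of the relevant geodesics, which is the delicate part of the argument.
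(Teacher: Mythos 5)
Your plan is correct and reaches the same conclusion, but it takes a genuinely different route from the paper for the optimality half. The paper's proof is a three-line sandwich argument: $S\subseteq Man(G)$ because its vertices are pendant, $R\subseteq Man(G)$ because $N[r]\subset N[c]$ for every $c\in C$ (both via Observation~\ref{observation:1}), and every $c_i\in C$ is a cut-vertex (it separates $s_i$ from the rest), so by Observation~\ref{observation:2} any minimum MEG set $M$ satisfies $S\cup R\subseteq Man(G)\subseteq M\subseteq V\setminus Cut(G)=S\cup R$; all inclusions collapse to equalities, and no edge-monitoring verification is ever carried out. You instead prove $C\cap Man(G)=\emptyset$ directly from Theorem~\ref{th:mandatory} (with the pendant $s_i$ as witness, which works, although your handling of the subcase $u=s_i$ is garbled --- the point is simply that any induced $2$-path $s_ic_ix$ lies on no $4$-cycle because $s_i$ has no second neighbour), and then you check explicitly that $S\cup R$ monitors every edge. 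That check does succeed with your pairs: $s_i,s_j$ monitor $c_ic_j$ and $s_ic_i$ since $s_ic_ic_js_j$ is the unique geodesic, and $r,s_i$ monitor $rc_i$ since $rc_is_i$ is the unique $r$--$s_i$ geodesic. Three remarks. First, the obstacle you anticipate for edges inside $R$ is illusory: an edge whose two endpoints both lie in the candidate set is monitored by those endpoints, because the edge itself is the unique shortest path between two adjacent vertices; no analysis of geodesics in $G[R]$ is needed. Second, for $R\subseteq Man(G)$ the clean statement is that, taking $u=c\in C$, there is no induced $2$-path $c\,r\,x$ at all (every neighbour of $r$ lies in $C\cup R\subseteq N[c]$), so the condition of Theorem~\ref{th:mandatory} holds vacuously --- this is exactly Observation~\ref{observation:1}, which is what the paper invokes. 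Third, your direct non-mandatoriness argument for $C$ is actually redundant in your own scheme: once $S\cup R\subseteq Man(G)$ and $S\cup R$ is verified to be an MEG set, the reverse inclusion $Man(G)\subseteq S\cup R$ is automatic, since $Man(G)$ is contained in every MEG set. Net comparison: the paper's cut-vertex sandwich buys brevity and avoids all geodesic checking; your version is longer but more self-contained, as it never relies on Theorem~\ref{th:cut}.
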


\begin{proof}
    Note that by Observation \ref{observation:1}, $S$ is contained in every MEG set (as every vertex of $S$ is a pendant vertex) and no vertex in $C$ can be part of a minimum MEG set, as they are cut-vertices. Now if $R\neq \emptyset$, then for each $r\in R$ and for any $c\in C$, $N[r]\subset N[c]$, hence by Observation \ref{observation:1}, $r$ is contained in every MEG set. Hence $meg(G)=\vert Man(G)\vert=\vert S\vert+\vert R\vert$.
\end{proof}

\begin{lemma}\label{lem:thick}
    Let $G=(V,E)$ be a thick spider and $(S,C,R)$ be the spider partition. Then, $meg(G)=\vert S\vert+\vert R\vert$, if $\vert C\vert=2$ and $meg(G)=\vert Man(G)\vert =\vert V\vert$, otherwise.
\end{lemma}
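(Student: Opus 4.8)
The plan is to handle the thick spider in two regimes according to $|C|$, matching the two cases in the statement. Recall that in a thick spider each $s_i \in S$ is adjacent to $C \setminus \{c_i\}$, so $\deg(s_i) = |C|-1$. First I would dispose of the degenerate case $|C|=2$: here each $s_i$ has degree $1$ (adjacent only to the single vertex $c_{3-i}$), so the $s_i$ are pendant vertices and the $c_i$ are cut-vertices. This reduces essentially to the thin-spider analysis of Lemma~\ref{lem:thin}: the pendants $S$ are forced into every MEG set by Observation~\ref{observation:1}, the vertices of $C$ are cut-vertices and hence excluded from every minimum MEG set by Theorem~\ref{th:cut}, and each $r \in R$ satisfies $N[r] \subset N[c]$ for $c \in C$ and is therefore mandatory. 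One then checks $S \cup R$ actually monitors all edges, giving $meg(G) = |S| + |R|$.

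For the main case $|C| \geq 3$, the goal is the much stronger claim $Man(G) = V$, i.e.\ every single vertex is mandatory and so $meg(G) = |V|$. The strategy is to verify the mandatory-vertex criterion of Theorem~\ref{th:mandatory} for each vertex type ($S$, $C$, $R$) separately, by exhibiting for each vertex $v$ a neighbor $u$ such that every induced $2$-path $uvx$ lies in a $4$-cycle. The cleanest route is to observe that in a thick spider with $|C| \geq 3$, many pairs of vertices are twins or have nested neighborhoods. Specifically, for distinct $s_i, s_j \in S$ one has $N(s_i) = C \setminus \{c_i\}$ and $N(s_j) = C \setminus \{c_j\}$, which differ, so they are not twins directly; however the symmetry of the thick spider means each induced path through a vertex can be closed into a $4$-cycle using the rich adjacency inside the clique $C$ and the near-complete adjacency between $S$ and $C$. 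I would check: for $s_i \in S$, pick $u = c_j$ (some $j \neq i$) and show any $x$ adjacent-configuration completes a $4$-cycle using a second clique vertex $c_\ell$; for $c_i \in C$, use the two associated $s$-type or clique neighbors; and for $r \in R$, use that $R$ is completely joined to $C$ with $|C| \geq 3$, giving abundant $4$-cycles.

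The cleanest way to organize this may instead be to argue that when $|C|\ge 3$, $G$ has no cut-vertex and then apply the $meg(G)=|V|$ characterization from the literature, or directly show every vertex satisfies Theorem~\ref{th:mandatory}; I would present whichever is shorter once the $4$-cycle bookkeeping is done. The key structural fact enabling everything is that with $|C| \geq 3$, the clique $C$ together with the thick adjacency pattern provides at least two ``alternate'' clique vertices to route any induced $2$-path around, which is exactly what fails when $|C| = 2$.

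The hard part will be the exhaustive-looking verification of Theorem~\ref{th:mandatory} for the $C$-vertices and for pairs where the two neighborhoods $C\setminus\{c_i\}$ and $C\setminus\{c_j\}$ overlap but are unequal: one must be careful that the chosen $4$-cycle is genuinely present and that the $2$-path being closed is actually induced. I expect the $s_i$ and $r$ vertices to be routine (nested or near-nested neighborhoods), and the subtlety to concentrate on confirming that every clique vertex $c_i$ is mandatory despite not being simplicial or pendant — this is where the $|C| \geq 3$ hypothesis must be used essentially, since for $|C|=2$ the $c_i$ are cut-vertices and the conclusion genuinely changes.
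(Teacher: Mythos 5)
Your case $\vert C\vert=2$ is fine and is exactly the paper's route: when $\vert C\vert=2$ a thick spider \emph{is} a thin spider, so Lemma~\ref{lem:thin} applies directly. For $\vert C\vert\geq 3$, your treatment of $S$ and $R$ is also sound, and in fact easier than you suggest: for any $j\neq i$ one has $N[s_i]\subseteq N[c_j]$ and $N[r]\subseteq N[c_j]$, so Observation~\ref{observation:1} makes all of $S\cup R$ mandatory with no $4$-cycle bookkeeping at all. The genuine gap is that the heart of the lemma --- that every clique vertex $c_i$ is mandatory --- is precisely the step you defer as ``the hard part'' and never carry out, so for $\vert C\vert\geq 3$ you have a plan rather than a proof. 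The missing verification is short but hinges on choosing the right witness in Theorem~\ref{th:mandatory}: take $u=c_j$ for any $j\neq i$. Then $N(c_i)\setminus N[c_j]=\{s_j\}$, so the \emph{only} induced $2$-path of the form $c_jc_ix$ is $c_jc_is_j$, and it lies on the $4$-cycle $c_jc_is_jc_k$ for any $k\notin\{i,j\}$, which exists exactly because $\vert C\vert\geq 3$.

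Moreover, both of the hedged alternatives you float would fail. First, your suggestion to use ``$s$-type or clique neighbors'' as the witness is not an indifferent choice: if you take an $S$-vertex $s_k$ ($k\neq i$) as the witness for $c_i$, then for $\vert C\vert=3$ the induced $2$-path $s_kc_is_m$ (with $\{i,k,m\}=\{1,2,3\}$) lies on no $4$-cycle, since $c_i$ is the \emph{unique} common neighbor of $s_k$ and $s_m$ (recall $S$ is stable and has no neighbors in $R$); so the witness must be a clique vertex. Second, the fallback ``show $G$ has no cut-vertex and apply the $meg(G)=\vert V\vert$ characterization from the literature'' is unsound: the absence of cut-vertices only yields the trivial upper bound $meg(G)\leq \vert V\vert$ via Observation~\ref{observation:2}, and certainly does not force equality ($2$-connected graphs such as long cycles have $meg$ far below $n$). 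The known characterization of graphs with $meg(G)=n$ is exactly that every vertex is mandatory in the sense of Theorem~\ref{th:mandatory}, i.e.\ the very thing to be proven here, so that route is circular.
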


\begin{proof}
    Note that if $\vert C\vert=2$, then $G$ is a thin spider; hence, the result follows from the previous lemma. Now, let $\vert C\vert\geq 3$. Consider any two distinct vertices $c_i,c_j$ from $C$. The only induced $2$-path of type $c_jc_ix$ is $c_jc_is_j$, which is part of a $4$-cycle $c_jc_is_jc_k$, where $k\notin\{i,j\}$; such $k$ exists as $\vert C\vert\geq 3$. This implies that $c_i$ is a mandatory vertex for every $c_i\in C$. Now if $R\neq \emptyset$, then for any $r\in R$ (or $s\in S$), $N[r]\subset N[c]$ (or $N[s]\subset N[c]$), for any $c\in C$. Hence, by Observation \ref{observation:1}, $R\cup S$ is contained in every MEG set. Hence $meg(G)=\vert V\vert=\vert Man(G)\vert$.
\end{proof}

Now, the only case that remains to be seen is when $G$ is a join of two $P_4$-sparse graphs $G_1$ and $G_2$. Note that if $G_1$ and $G_2$ both are nontrivial graphs (having more than one vertex), then $meg(G)=\vert V(G)\vert$. Now assume that, without loss of generality, $\vert V(G_1)\vert=1$ and $V(G_1)=\{v\}$. Two cases can appear. In the first case, if $G_2$ is a disconnected graph, then $v$ is a cut-vertex and $N[x]\subseteq N[v]$ for any $x\in V(G_2)$, hence $meg(G)=\vert V(G_2)\vert$. For the other case, let $G_2$ be a connected graph. In that case, we propose the following lemma.

\begin{lemma}\label{lem:join}
    Let $G=G_1\oplus G_2$ and $V(G_1)=\{v\}$ and $G_2$ is a non trivial connected $P_4$-sparse graph. Then, 
    \begin{enumerate}
        \item $meg(G)=n$, if there exists a vertex $u$ in $G_2$, which satisfies the property that $d_{G_2}(u,x)\leq 2$ for every $x\in V(G_2)\setminus\{u\}$.
        \item $meg(G)=n-1$, otherwise.
    \end{enumerate}  
\end{lemma}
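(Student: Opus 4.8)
The plan is to analyze the graph $G = G_1 \oplus G_2$ where $v$ is joined to all of $G_2$, using Theorem~\ref{th:mandatory} (the mandatory-vertex characterization) together with Observation~\ref{observation:2}, which tells us $|Man(G)| \le meg(G) \le |V \setminus Cut(G)|$. Since $G_2$ is connected and $v$ is adjacent to every vertex of $G_2$, the graph $G$ has no cut-vertex (removing $v$ leaves $G_2$ connected, and removing any vertex of $G_2$ still leaves everything attached through $v$), so $Cut(G) = \emptyset$ and $V \setminus Cut(G) = V$. Thus $meg(G) \le n$, and the entire question reduces to computing $Man(G)$ and deciding exactly which vertices fail to be mandatory.

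First I would show that every vertex of $G_2$ is mandatory. The key geometric fact is that adding the universal vertex $v$ makes the diameter of $G$ at most $2$, so for any $x \in V(G_2)$ every induced $2$-path $uxw$ sits inside a $4$-cycle $uxwv$ precisely when $u$ and $w$ are both nonadjacent to each other but adjacent to $v$ — which they always are, since $v$ is universal. More carefully, by Theorem~\ref{th:mandatory} a vertex $x$ of $G_2$ is mandatory if there is a neighbour $u$ of $x$ such that every induced $2$-path $uxw$ lies in a $4$-cycle; I would take $u = v$ and check that for any induced $2$-path $vxw$ (so $w$ is nonadjacent to $v$, but $v$ is universal in $G$, so no such $w$ exists among $G_2$). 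Hence I would instead pick $u$ to be a suitable neighbour of $x$ inside $G_2$ and use $v$ to close every induced $2$-path $uxw$ into the $4$-cycle $uxwv$, since $v$ is adjacent to both $u$ and $w$. This argument applies uniformly to all $x \in V(G_2)$, giving $V(G_2) \subseteq Man(G)$, hence $|Man(G)| \ge n-1$.

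The whole difficulty then concentrates on the single vertex $v$: is $v$ mandatory or not? By Theorem~\ref{th:mandatory}, $v \in Man(G)$ iff some neighbour $u$ of $v$ (i.e. some $u \in V(G_2)$) has the property that every induced $2$-path $uvx$ lies in a $4$-cycle. An induced $2$-path $uvx$ requires $x \in V(G_2)$ with $x$ nonadjacent to $u$; the $4$-cycle closing it must be $uvxy$ with $y$ adjacent to both $u$ and $x$ but not to $v$ — impossible since $v$ is universal — so the closing $4$-cycle must instead route through a common neighbour of $u$ and $x$ inside $G_2$, i.e. the path $uvx$ lies in a $4$-cycle iff $u$ and $x$ have a common neighbour in $G_2$, equivalently $d_{G_2}(u,x) = 2$. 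Therefore $v$ is mandatory iff there exists $u \in V(G_2)$ such that every $x \in V(G_2)$ nonadjacent to $u$ satisfies $d_{G_2}(u,x)=2$ — that is, $d_{G_2}(u,x) \le 2$ for all $x \in V(G_2)\setminus\{u\}$, which is exactly the hypothesis in case~1. This establishes: $v \in Man(G)$ iff such a $u$ exists, giving $meg(G) = |Man(G)| = n$ in case~1 and $meg(G) = |Man(G)| = n-1$ in case~2.

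The main obstacle, and the step I would write most carefully, is the precise equivalence in the third paragraph between ``the induced $2$-path $uvx$ is part of a $4$-cycle'' and ``$d_{G_2}(u,x)=2$.'' The subtlety is that the closing vertex $y$ of the $4$-cycle $uvxy$ cannot be $v$ itself and, because $v$ is universal, $y$ cannot be a vertex outside $G_2$; one must verify that $y$ must be a genuine common neighbour of $u$ and $x$ within $G_2$, and conversely that any such common neighbour yields a valid (not necessarily induced, but that is all Theorem~\ref{th:mandatory} requires) $4$-cycle. I would also double-check the boundary behaviour when $u$ is itself adjacent to all of $V(G_2)$ (so there are no induced $2$-paths $uvx$ at all, vacuously making the condition hold and correctly placing us in case~1), and confirm that the $P_4$-sparseness of $G_2$ is not actually needed for this lemma — it is inherited from the overall structural decomposition in Theorem~\ref{th:5} and used only to guarantee we are in this case — so that the distance condition alone governs the outcome.
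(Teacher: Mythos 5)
Your proof is correct, and it reaches the conclusion by a partly different route than the paper. For the lower bound and Case~1 you do essentially what the paper does: every $x\in V(G_2)$ is mandatory (your first, ``vacuous'' argument with $u=v$ already suffices --- it is exactly Observation~\ref{observation:1}, since $N[x]\subseteq N[v]$ --- so the switch to a neighbour of $x$ inside $G_2$ was unnecessary, though also valid), and your equivalence ``$v\in Man(G)$ iff some $u\in V(G_2)$ has $d_{G_2}(u,x)\le 2$ for all $x\neq u$'' contains the paper's Case~1 argument as its forward direction. The genuine difference is Case~2: the paper proves the upper bound $meg(G)\le n-1$ \emph{constructively}, showing that for each edge $vx$ there is a pair $x,x'$ with $d_{G_2}(x,x')\ge 3$ whose unique shortest path in $G$ is $xvx'$, so that $V(G_2)$ monitors every edge; you instead invoke the ``only if'' direction of Theorem~\ref{th:mandatory} to conclude $v\notin Man(G)$ and then read off $meg(G)=n-1$. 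Your route is shorter and leans on the full strength of the characterization, but you should write down the one step you left implicit: $v\notin Man(G)$ means, by definition, that some MEG set $M$ omits $v$; since $Man(G)=V\setminus\{v\}$ is contained in every MEG set, that $M$ must equal $V\setminus\{v\}$, so $V\setminus\{v\}$ is an MEG set and $meg(G)\le n-1$. Without this line, the assertion $meg(G)=\vert Man(G)\vert$ in Case~2 is a leap, since that equality fails for general graphs. Finally, the clause in your third paragraph requiring the closing vertex $y$ to be ``not adjacent to $v$'' is a momentary misstep --- Theorem~\ref{th:mandatory} asks only for a $4$-cycle, not an induced one --- but you correct it yourself, and the equivalence you ultimately state (the $2$-path $uvx$ closes into a $4$-cycle iff $u$ and $x$ have a common neighbour in $G_2$, i.e.\ $d_{G_2}(u,x)=2$) is exactly right.
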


\begin{proof}
    Note that for every vertex $x\in V(G_2)$, $N[x]\subseteq N[v]$. Hence $x$ belongs to every MEG set of $G$, implying $V(G_2)\subseteq M$, where $M$ is any minimum MEG set of $G$. Suppose there exists a vertex $u\in V(G_2)$, with the property: $d_{G_2}(u,x)\leq 2$ for every $x\in X$. Now consider any induced $2$-path $uvy$. Clearly, $u,y$ are not adjacent. Hence $d_{G_2}(u,y)=2$. So, $uvy$ is part of a $4$-cycle, as $y$ is at distance $2$ from $u$ in the graph $G_2$. Hence, by Theorem \ref{th:mandatory}, $v$ is a mandatory vertex in $G$, for every $v\in V(G)$; which implies that $meg(G)=n$.
    
    Now, let us assume that no such vertex $u$ exists. Consider any edge $vx$, $x\in V(G_2)$. By the assumption there exists a vertex $x'\in V(G_2)$ such that $d_{G_2}(x,x')\geq 3$. Hence, the pair $x,x'$ monitors the edge $vx$. Since $x$ is arbitrary in $G_2$, then every edge of the form $vx$ can be monitored by some pair of vertices in $G_2$. Hence, $meg(G)=n-1$. 
\end{proof}

Hence, from the above discussion and Lemma \ref{lem:thin}, \ref{lem:thick}, \ref{lem:join} the following theorem can be concluded:

\begin{theorem}\label{th:p4}
    Given a connected $P_4$-sparse graph $G=(V,E)$, $meg(G)=\vert Man(G)\vert$ and thus, this set is a minimum MEG set of $G$, that can be computed in polynomial time.
\end{theorem}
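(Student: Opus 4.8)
The plan is to prove Theorem~\ref{th:p4} by structural induction on the $P_4$-sparse graph $G$, following the recursive decomposition provided by Theorem~\ref{th:5}. Since the claim is that $meg(G)=\vert Man(G)\vert$ (equivalently, that $G$ is a graph of type~1, so that $Man(G)$ is itself an optimal MEG set), the key is to verify that in each of the four cases of the decomposition, the mandatory set is simultaneously an MEG set. The single-vertex base case is vacuous since the paper assumes graphs with at least two vertices; the genuine base cases are the spider cases, which are already fully settled by Lemma~\ref{lem:thin} (thin) and Lemma~\ref{lem:thick} (thick), where in each case $Man(G)$ is shown explicitly to be an optimal MEG set.

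First I would dispose of the join case $G = G_1 \oplus G_2$. If both $G_1$ and $G_2$ are nontrivial, every vertex lies in a triangle-rich configuration forcing $meg(G)=\vert V(G)\vert=\vert Man(G)\vert$ (every vertex is mandatory), as noted in the discussion preceding Lemma~\ref{lem:join}. If one factor is a single vertex $\{v\}$ and $G_2$ is disconnected, then $v$ is a cut-vertex dominating every $x\in V(G_2)$, giving $meg(G)=\vert V(G_2)\vert=\vert Man(G)\vert$. The remaining subcase, $\vert V(G_1)\vert=1$ with $G_2$ nontrivial and connected, is precisely Lemma~\ref{lem:join}, which determines both $meg(G)$ and whether $v$ is mandatory; in either of its two outcomes one checks that the optimal value equals $\vert Man(G)\vert$ (in outcome~1, $v$ joins $V(G_2)$ in $Man(G)$; in outcome~2, $v\notin Man(G)$ and the edges $vx$ are all monitored by $V(G_2)$, so $V(G_2)=Man(G)$ is an MEG set).

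The disjoint-union case $G = G_1 \cup G_2$ is where induction does the real work, and I expect this to require a little care because the stated theorem assumes $G$ is connected while the recursion in Theorem~\ref{th:5} produces disconnected intermediate graphs. The clean way to handle this is to observe that for a disconnected graph the MEG problem decomposes completely across connected components: an edge in one component is monitored only by pairs within that component, so $meg(G)=\sum_i meg(C_i)$ and $Man(G)=\bigcup_i Man(C_i)$ over the components $C_i$. Thus I would either strengthen the induction hypothesis to allow disconnected $P_4$-sparse graphs (applying the inductive claim componentwise), or equivalently apply the connected statement to each component and take unions. Either way, $meg(G)=\vert Man(G)\vert$ follows immediately from the component-wise equality.

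The main obstacle is bookkeeping rather than a deep difficulty: I must confirm that the four cases of Theorem~\ref{th:5} are genuinely exhaustive for a connected $P_4$-sparse graph with at least two vertices, and that the spider and join lemmas together cover every connected instance, leaving only the union case (which yields a disconnected graph and is handled by the componentwise argument above). I would also double-check the edge cases of Lemma~\ref{lem:join} to ensure the identification of $Man(G)$ is exactly as claimed, since the whole theorem reduces to reading off $\vert Man(G)\vert$ from the lemmas. Once exhaustiveness and the componentwise reduction are in place, the conclusion $meg(G)=\vert Man(G)\vert$, together with the polynomial-time computability of $Man(G)$ via the testable criterion of Theorem~\ref{th:mandatory}, completes the proof.
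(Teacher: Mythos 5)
Your proposal is correct, and it rests on exactly the same ingredients as the paper's own proof: the decomposition of Theorem~\ref{th:5} together with Lemmas~\ref{lem:thin}, \ref{lem:thick}, \ref{lem:join} and the discussion of the join case preceding Lemma~\ref{lem:join}. The one structural difference is that you wrap these in an induction, whereas the paper's proof is a direct, non-recursive case analysis --- and the induction is in fact unnecessary. For a connected graph on at least two vertices, Theorem~\ref{th:5} leaves only two possibilities (join or spider), and in both cases the lemmas determine $meg(G)$ without ever looking inside the factors: in the spider case every $r\in R$ is mandatory because $N[r]\subset N[c]$ for $c\in C$, so the $P_4$-sparse structure of $G[R]$ is never used; in the join case every $x\in V(G_2)$ is mandatory because $N[x]\subseteq N[v]$, so nothing about $G_2$ is needed beyond its connectivity and the distance-$2$ condition of Lemma~\ref{lem:join}. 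Consequently the recursion never descends, no inductive hypothesis is ever invoked, and the disjoint-union case you labour over --- componentwise additivity of $meg$ and $Man$ for disconnected graphs --- never arises for the theorem as stated; your treatment of it is correct (edges in one component are only monitored by pairs inside that component), but it can simply be dropped. What your version buys is a slightly more general statement that also covers disconnected $P_4$-sparse graphs; what the paper's version buys is brevity, since the case analysis terminates at depth one.
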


\section{MEG for bipartite permutation graphs}
\label{sec:bip_per}
A \emph{strong ordering} $(<_X, <_Y)$ of a bipartite graph $G = (X\cup Y, E)$ entails an ordering $<_X$ for the set $X$ and an ordering $<_Y$ for the set $Y$. This ordering adheres to the condition that for any edges $ab$ and $a_0b_0$, where $a, a_0 \in X$ and $b, b_0 \in Y$, if $a$ precedes $a_0$ in $<_X$ and $b_0$ precedes $b$ in $<_Y$, then both $ab_0$ and $a_0b$ are edges in $G$. An ordering $<_X$ for $X$ exhibits the \emph{adjacency property} if, for each vertex in $Y$, its neighbours in $X$ appear consecutively in $<_X$. Moreover, the ordering $<_X$ is said to have the \emph{enclosure property} if, for any pair of vertices $y$ and $y_0$ in $Y$ where $N(y)$ is contained in $N(y_0)$, the vertices of $N(y_0) \setminus N(y)$ are consecutive in $<_X$. Strong ordering, adjacency property, and enclosure property, as delineated above, yield the subsequent result as discussed in previous literature.

\begin{theorem}[\cite{DBLP:journals/dam/SpinradBS87}]\label{th:bip_perm}
    The following statements are equivalent for a graph $G =(X\cup Y, E)$.
\begin{enumerate}
    \item $G = (X\cup Y, E)$ is a bipartite permutation graph.
    \item  $G$ has a strong ordering.
    \item There exists an ordering of X, which has the adjacency property and the
enclosure property.
\end{enumerate}

\end{theorem}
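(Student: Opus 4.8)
The plan is to establish the three stated conditions as equivalent by proving the cyclic chain $(1)\Rightarrow(2)\Rightarrow(3)\Rightarrow(1)$, each step exploiting the monotone behaviour that a linear layout forces on neighbourhoods. Throughout I write $a<_X a'$ for the $X$-order and $b<_Y b'$ for the $Y$-order, and I use freely that $G$ is bipartite with the fixed bipartition $(X,Y)$.

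For $(1)\Rightarrow(2)$ I would start from a permutation model of $G$: two parallel horizontal lines carrying one straight segment per vertex, two segments crossing exactly when the corresponding vertices are adjacent. I would read off $<_X$ from the left-to-right order of the top endpoints of the $X$-segments and $<_Y$ from the bottom endpoints of the $Y$-segments (possibly reversing one of them once the signs are checked). The defining ``swap'' property of crossings --- segment $u$ meets segment $v$ iff their top order is opposite to their bottom order --- should translate directly into the strong-ordering condition: if $ab$ and $a'b'$ are edges with $a<_X a'$ and $b'<_Y b$, then $ab'$ and $a'b$ must cross as well. Verifying this is a short sign-bookkeeping argument on the four endpoints, which I would treat as routine.

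For $(2)\Rightarrow(3)$ I would discard $<_Y$ and argue that $<_X$ alone already carries both properties. For the adjacency property I would show that each $N(y)$ is an interval of $<_X$: if $a<_X a''<_X a'$ with $a,a'\in N(y)$ but $a''\notin N(y)$, then taking any neighbour $y''$ of $a''$ and applying the strong-ordering condition to the pair $\{a''y'',\,a'y\}$ (when $y<_Y y''$) or to $\{ay,\,a''y''\}$ (when $y''<_Y y$) forces the edge $a''y$ to exist, a contradiction. The enclosure property would follow from the same comparison applied to two nested neighbourhoods $N(y)\subseteq N(y')$: the strong ordering pushes every vertex of $N(y')\setminus N(y)$ to one side of $N(y)$ and hence makes them consecutive. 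Both are finite case analyses on the order of endpoints, with the (harmless) caveat that isolated $X$-vertices be handled separately.

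The main obstacle is $(3)\Rightarrow(1)$, the synthesis direction, where from an $X$-ordering satisfying adjacency and enclosure I must actually build a permutation model. The adjacency property lets me record each $N(y)$ as an interval $[\ell_y,r_y]$ of the fixed $X$-layout, so the task reduces to placing one segment per $Y$-vertex that crosses precisely the $X$-segments inside its interval and no others, \emph{simultaneously} for all of $Y$. The enclosure property is exactly the constraint governing how two such intervals may overlap, so I would use it to define a consistent secondary order on $Y$ that breaks ties among equal or nested neighbourhoods, and then insert the $Y$-segments one at a time in that order, arguing inductively that each insertion realises the required crossings without disturbing the segments already placed. Proving that this greedy insertion never produces an unwanted or a missing crossing --- that enclosure genuinely rules out every obstructing configuration --- is the crux and the point at which I expect the argument to demand the most care.
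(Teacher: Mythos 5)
This statement is quoted background: the paper does not prove it, but cites it from Spinrad, Brandst\"adt and Stewart \cite{DBLP:journals/dam/SpinradBS87}, so there is no in-paper proof to compare against and your attempt must be judged on its own. Your directions $(1)\Rightarrow(2)$ and $(2)\Rightarrow(3)$ are sound and routinely completable: since $X$ and $Y$ are independent sets, the segments within each part are pairwise non-crossing, so the top and bottom endpoint orders agree within $X$ and within $Y$, and the four-endpoint sign check does yield the strong-ordering condition; likewise your two applications of the strong ordering (to the edge pairs $\{a''y'',\,a'y\}$ and $\{ay,\,a''y''\}$) give the adjacency property, and the analogous comparison shows that $N(y')\setminus N(y)$ cannot have vertices on both sides of $N(y)$, giving enclosure.

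The genuine gap is exactly where you admit it is: $(3)\Rightarrow(1)$ is announced as a plan, not proved, and that direction is the substance of the theorem. The missing step can be named precisely. Place the $X$-segments as pairwise non-crossing segments in the given order; for each $y\in Y$ with $N(y)=[\ell_y,r_y]$ (an interval by the adjacency property), put the top endpoint of $y$ just to the right of the top endpoint of $r_y$ and its bottom endpoint just to the left of the bottom endpoint of $\ell_y$; this realizes exactly the edges between $y$ and $X$. The crux is that no two $Y$-segments may cross (as $Y$ is independent), and with this placement $y_1,y_2$ cross precisely when $r_{y_1}<r_{y_2}$ but $\ell_{y_1}>\ell_{y_2}$ (up to ties), i.e.\ when $N(y_1)\subsetneq N(y_2)$ with vertices of $N(y_2)\setminus N(y_1)$ on both sides of the nonempty interval $N(y_1)$ --- exactly the configuration forbidden by the enclosure property. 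So the lemma your proposal needs but never states is: enclosure implies that $r_{y_1}<r_{y_2}$ forces $\ell_{y_1}\le \ell_{y_2}$; granted this, ordering $Y$ lexicographically by $(r_y,\ell_y)$ and breaking the remaining ties identically on both parallel lines (with isolated vertices placed off to one side) yields a valid permutation model in one shot, with no induction needed. Your greedy one-at-a-time insertion would have to establish this same invariant anyway; without it, the proposal does not establish $(3)\Rightarrow(1)$ and hence does not prove the equivalence.
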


Hence, from Theorem \ref{th:bip_perm}, we can conclude that if $G$ is a bipartite permutation graph, then $G$ has a strong ordering. We state another result from the previous literature.

\begin{theorem}[\cite{DBLP:journals/siamdm/HeggernesHLN12}]
    Let $(<_X, <_Y )$ be a strong ordering of a connected bipartite
permutation graph $G = (X\cup Y, E)$. Then both $<_X$ and $<_Y$ have the adjacency property and the enclosure property.
\end{theorem}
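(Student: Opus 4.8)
The plan is to derive both the adjacency and the enclosure properties directly from the defining inequality of a strong ordering, invoking connectivity only where a witness neighbour is needed, and then to finish by a symmetry argument. I would first establish the \emph{adjacency property} for $<_X$. Suppose it fails: there are $a, a'' \in N(y)$ and some $a'$ with $a <_X a' <_X a''$ but $a'y \notin E$. Since $G$ is connected, $a'$ has a neighbour $b' \in Y$, and I would split on the $<_Y$-position of $b'$ relative to $y$. If $y <_Y b'$, I apply the strong-ordering condition to the edges $a'b'$ and $a''y$ (their endpoints cross, as $a' <_X a''$ and $y <_Y b'$) to deduce $a'y \in E$. If $b' <_Y y$, I apply it instead to $ay$ and $a'b'$ (now $a <_X a'$ and $b' <_Y y$) to reach the same conclusion. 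Either way $a'y \in E$, contradicting the choice of $a'$, so $N(y)$ is an interval of $<_X$.

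For the \emph{enclosure property} I would reuse the adjacency property just proved: for $y, y_0 \in Y$ with $N(y) \subseteq N(y_0)$, both $N(y)$ and $N(y_0)$ are intervals of $<_X$, and $N(y)$ is a subinterval of $N(y_0)$. Consequently $N(y_0) \setminus N(y)$ fails to be consecutive exactly when $N(y_0)$ contains a vertex $a_L$ lying strictly to the left of all of $N(y)$ \emph{and} a vertex $a_R$ lying strictly to the right. Assuming $N(y) \neq \emptyset$ (otherwise the difference is just $N(y_0)$ and is consecutive) and fixing some $a \in N(y)$, so that $a_L <_X a <_X a_R$, I rule the bad configuration out by comparing $y$ and $y_0$ in $<_Y$. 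If $y <_Y y_0$, the strong-ordering condition applied to $a_Ly_0$ and $ay$ forces $a_Ly \in E$; if $y_0 <_Y y$, applying it to $ay$ and $a_Ry_0$ forces $a_Ry \in E$. Since $a_L, a_R \notin N(y)$, each gives a contradiction, so the left excess is impossible when $y <_Y y_0$ and the right excess is impossible when $y_0 <_Y y$. In all cases $N(y_0) \setminus N(y)$ lies entirely on one side of $N(y)$ and is therefore consecutive.

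Finally, I would note that the strong-ordering condition is symmetric under exchanging the roles of $(X, <_X)$ and $(Y, <_Y)$: swapping the two parts turns an edge into an edge and turns the crossing hypothesis into its mirror image. Hence the very same arguments, with $X$ and $Y$ interchanged, yield the adjacency and enclosure properties for $<_Y$, completing the proof.

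The main obstacle is the enclosure step rather than adjacency. Adjacency is essentially a single application of the strong-ordering inequality (plus connectivity to produce $b'$), whereas enclosure requires first importing the interval structure from the adjacency property and then showing that the smaller neighbourhood $N(y)$ must abut an end of the larger one $N(y_0)$ instead of sitting strictly inside it. The clean way to control this is the observation that the side on which the surplus neighbours can appear is forced by the $<_Y$-order of $y$ and $y_0$, which is exactly what makes the two-case contradiction go through.
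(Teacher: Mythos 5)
Your proposal is correct, but there is nothing in the paper to compare it against: the statement is imported verbatim from Heggernes et al.~\cite{DBLP:journals/siamdm/HeggernesHLN12} and the paper gives no proof of it, using it only as a black box for the bipartite permutation section. Judged on its own merits, your argument goes through. In the adjacency step, both applications of the strong-ordering condition are instantiated correctly: with $a <_X a' <_X a''$, $a,a''\in N(y)$, $a'y\notin E$ and $b'\in N(a')$, the pair of edges $a'b'$ and $a''y$ (when $y <_Y b'$) or $ay$ and $a'b'$ (when $b' <_Y y$) forces $a'y\in E$, and connectivity is used only to guarantee that $a'$ has a neighbour at all. In the enclosure step, the reduction is sound: since $N(y_0)$ is an interval by the adjacency property just proved, a failure of consecutiveness of $N(y_0)\setminus N(y)$ produces $a_L <_X a <_X a_R$ with $a\in N(y)$ and $a_L,a_R\in N(y_0)\setminus N(y)$; the case $y <_Y y_0$ applied to the edges $a_Ly_0$ and $ay$ yields $a_Ly\in E$, the case $y_0 <_Y y$ applied to $ay$ and $a_Ry_0$ yields $a_Ry\in E$, and either contradiction suffices because the bad configuration requires both excess vertices simultaneously. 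Finally, your symmetry observation is accurate: the defining condition of a strong ordering is invariant under exchanging $(X,<_X)$ and $(Y,<_Y)$ (relabel which edge plays the role of $ab$), so the same arguments give both properties for $<_Y$. One could make explicit that $b'\neq y$ and $y\neq y_0$ in the respective steps, but both are immediate from the standing assumptions.
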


Hence, given a bipartite graph $G=(X\cup Y,E)$, with its strong ordering $(<_X,<_Y)$, the following notations can be defined:
\begin{itemize}
    \item $f(v)=v'$, where $v'\in N(v)$ and $v'< u$ for every $u\in N(v)$
    \item $l(v)=v'$, where $v'\in N(v)$ and $u< v'$ for every $u\in N(v)$.
\end{itemize}

In the above definition of $f$ and $l$, the notation $<$ is used for both $<_X$ and $<_Y$. Note that the ordering $(<_X,<_Y)$ has the following properties (see \cite{LAI199733}):

\begin{enumerate}
    \item Given any vertex of $G$, its neighbor set consists of some consecutive vertices in $<_X$ or $<_Y$.
    \item For a pair of vertices $u, v$ from $X$ or $Y$, if $u < v$ then $f(u) \leq f(v)$ and $l(u) \leq l(v)$.
\end{enumerate}

Throughout this section, we assume that $G=(X\cup Y,E)$ is a bipartite permutation graph. Given a strong ordering $(<_X,<_Y)$ of $G$, let the vertices of $X$ and $Y$ be ordered as $x_1<_X x_2<_X\ldots x_p$ and $y_1<_Y y_2<_Y\ldots y_q$, where $p=\vert X\vert$ and $q=\vert Y\vert$. When the context is clear, we write $<$ instead of $<_X$ or $<_Y$. In this section we call $x_1,x_p,y_1,y_q$ as \emph{extreme} vertices and all the other vertices as \emph{interim} vertices.

\begin{lemma}\label{lem:bip_per}
    Given a connected bipartite permutation graph $G=(X\cup Y,E)$ with strong ordering $(<_X,<_Y)$ and an interim vertex $u\in V(G)$, $u$ is not a cut-vertex if and only if there exists $u_0\in N(u)$ such that $f(u_0)<u$ and $l(u_0)>u$.
\end{lemma}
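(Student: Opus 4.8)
The plan is to reduce everything to the \emph{interval picture} of the strong ordering and to the single notion of a neighbor that \emph{straddles} $u$. Assume without loss of generality that $u=x_i\in X$; since $u$ is interim we have $1<i<p$. Recall that for each $y\in Y$ the neighborhood $N(y)$ is a set of consecutive vertices $\{x_{f(y)},\dots,x_{l(y)}\}$ of $X$ (adjacency property), and that the only way a path of $G$ can pass from an $X$-vertex on the left of $x_i$ to one on its right is through a $Y$-vertex adjacent to $X$-vertices on both sides, i.e.\ some $y$ with $f(y)<x_i<l(y)$. I first record a small but convenient fact: any such straddling $y$ has $f(y)<x_i<l(y)$, so $x_i$ lies in the consecutive block $N(y)$ and hence $y\in N(u)$; thus the existential ``there is $u_0\in N(u)$'' in the statement is equivalent to the plain ``there is a $Y$-vertex straddling $u$'', and I will work with the latter.

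For the direction ``no straddling vertex $\Rightarrow$ $u$ is a cut-vertex'', I would argue as follows. If no $y$ satisfies $f(y)<u<l(y)$, then every $y\in Y$ has either $l(y)\le x_i$ or $f(y)\ge x_i$, so its block lies entirely in $\{x_1,\dots,x_i\}$ or entirely in $\{x_i,\dots,x_p\}$. Delete $x_i$. Then any $Y$-vertex is adjacent only to vertices of $L=\{x_1,\dots,x_{i-1}\}$, or only to vertices of $R=\{x_{i+1},\dots,x_p\}$, or to none at all; consequently there is no edge of $G-u$ joining the left part to the right part (a crossing step $x_c\,y\,x_d$ with $x_c$ left of $x_i$ and $x_d$ right of $x_i$ would force $f(y)<x_i<l(y)$). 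Since $u$ is interim, both $L$ and $R$ are nonempty, so $G-u$ is disconnected and $u$ is a cut-vertex.

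For the converse, I would first establish a \emph{consecutive-straddling property} from connectivity alone: for every $1\le j<p$ some $y\in Y$ has $f(y)\le x_j$ and $l(y)\ge x_{j+1}$ (otherwise every block lies in $\{x_1,\dots,x_j\}$ or $\{x_{j+1},\dots,x_p\}$, and $G$ would split at the gap between $x_j$ and $x_{j+1}$). Given a straddling neighbor $y_0$ of $u$, these witnesses chain the vertices $x_1,\dots,x_{i-1}$ together in $G-u$ (each consecutive pair shares a common neighbor avoiding $x_i$), and likewise chain $x_{i+1},\dots,x_p$ together; finally $y_0$ is adjacent to both $x_{i-1}$ and $x_{i+1}$ and so joins the two parts. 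It remains to reattach every $Y$-vertex, and here is the one delicate point: a $Y$-vertex whose block is exactly $\{x_i\}$ would become isolated after deleting $u$. The main obstacle is to rule this out, and I expect to do so via the monotonicity of $f$ and $l$: if some $y'$ had $f(y')=l(y')=x_i$ while $y_0$ straddles $u$, then comparing $y'$ and $y_0$ in $<_Y$ yields $f(y')\le f(y_0)$ (if $y'<_Y y_0$) or $l(y_0)\le l(y')$ (if $y_0<_Y y'$), each contradicting $f(y_0)<x_i<l(y_0)$. Hence no $Y$-vertex is adjacent only to $x_i$, every $Y$-vertex still meets some surviving $X$-vertex, and $G-u$ is connected. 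The case $u\in Y$ is symmetric, exchanging the roles of $X$ and $Y$.
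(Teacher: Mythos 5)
Your proof is correct and takes essentially the same approach as the paper's: both directions rest on the adjacency (consecutiveness) property, on the fact that consecutive vertices of $X$ must share a common $Y$-neighbor (forced by connectivity of $G$), and on the observation that in the absence of a straddling neighbor every $Y$-neighborhood lies entirely on one side of $x_i$, so that deleting $x_i$ separates $\{x_1,\ldots,x_{i-1}\}$ from $\{x_{i+1},\ldots,x_p\}$. If anything, your write-up is slightly more careful than the paper's: the paper's identity $G\setminus\{u\}=G[X_i\cup N(X_i)\cup X_i'\cup N(X_i')]$ silently assumes that no $Y$-vertex has neighborhood exactly $\{x_i\}$, a degenerate case you explicitly rule out using the monotonicity of $f$ and $l$.
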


\begin{proof}
    Without loss of generality, let $u\in X$ and $u=x_i$. Let there exists $u_0\in Y$ such that $f(u_0)<u$ and $l(u_0)>u$. For the sake of contradiction, assume that $u$ is a cut-vertex. Let $X_i=\{x_1,\ldots,x_{i-1}\}$ and $X_i'=\{x_{i+1},\ldots, x_p\}$. In the next claim, we show that both the graphs $G[X_i\cup N(X_i)]$ and $G[X_i'\cup N(X_i')]$ are connected. 
    \begin{claim}
         $G[X_i\cup N(X_i)]$ and $G[X_i'\cup N(X_i')]$ are connected. 
    \end{claim}

    \begin{proof}
        For the sake of contradiction, let there exists $x_j\in X_i$ such that $N(x_j)\cap N(x_{j+1})=\emptyset$. This implies that $l(x_j)<f(x_{j+1})$, hence $l(x_j)<f(x_{k})$ for all $k\in \{j+1,\ldots,p\}$. Hence, $N(x_j)\cap N(x_k)=\emptyset$, for all $k\in \{j+1,\ldots,p\}$. This implies that there does not exist any path from $x_j$ to $x_{j+1}$ in $G$, which contradicts the fact that $G$ is a connected graph. Hence, $N(x_j)\cap N(x_{j+1})\neq \emptyset$, for all $j\in \{1,\ldots,i-1\}$. This implies that $G[X_i\cup N(X_i)]$ is connected. Analogously, it can be shown that $G[X_i'\cup N(X_i')]$ is also connected.
    \end{proof}
    
    Since $u_0\in N(X_i)\cap N(X_i')$, $G[X_i\cup N(X_i)\cup X_i'\cup N(X_i')]$ is also connected (refer to Figure \ref{fig:bip_per} for better understanding). Hence $G\setminus \{u\}=G[(X_i\cup N(X_i)\cup X_i'\cup N(X_i'))]$ is a connected graph, which implies that $u$ is not a cut-vertex. If $u\in Y$, the proof follows from similar arguments.

    \begin{figure}[h!]
    \centering
    \begin{subfigure}{0.45\textwidth}
        \centering
         \includegraphics[scale=0.4]{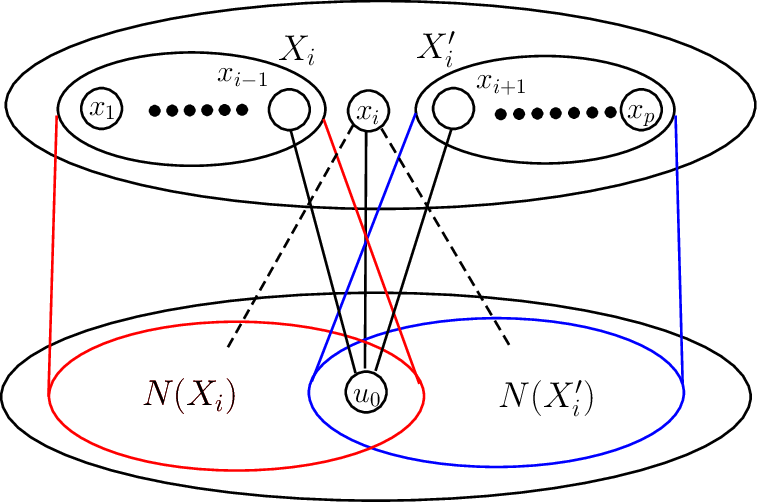}
        \caption{$u=x_i$ is not a cut-vertex}
        \label{fig:bip_per}
    \end{subfigure}
\hspace{0.5in}
    \begin{subfigure}{0.4\textwidth}
    \centering
         \includegraphics[scale=0.4]{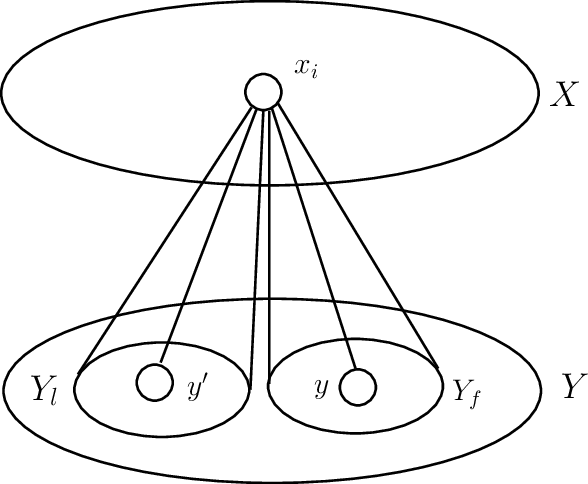}
        \caption{$u=x_i$ is not a cut-vertex}
        \label{fig:bip_per_2}
    \end{subfigure}
    \caption{Figures used in the proof of Lemma \ref{lem:bip_per}}
\label{pendant_2}
\end{figure}

    Conversely, assume that $u$ is not a cut-vertex. For the sake of contradiction, assume that for every $y_j\in N(x_i)$, either $f(y_j)=x_i$ or $l(y_j)=x_i$. Hence, $N(x_i)$ can be partitioned into two sets $Y_f=\{y\in N(x_i)~\vert~f(y)=x_i\}$ and $Y_l=\{y\in N(x_i)~\vert~l(y)=x_i\}$. Note that none of the set $Y_f$ and $Y_l$ can be empty as $x_i$ is an interim vertex and $G$ is a connected graph. Consider any two vertices, $y\in Y_f$ and $y'\in Y_l$. Note that every $y-y'$ path contains $x_i$ (refer to Figure \ref{fig:bip_per_2}). Hence, $G\setminus\{x_i\}$ is disconnected, which leads to a contradiction. Hence there exists a vertex $u_0\in N(u)$ such that $f(u_0)<u$ and $l(u_0)>u$.
 \end{proof}

    \begin{theorem}\label{lem:bip_per_2}
       Given a connected bipartite permutation graph $G = (X\cup Y, E)$, $Man(G) =V \setminus Cut(G)$ and thus, this set is a minimum MEG set of G, that can be computed in polynomial time.
    \end{theorem}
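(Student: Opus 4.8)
The plan is to use Observation~\ref{observation:2}, which already gives $Man(G)\subseteq V\setminus Cut(G)$; it therefore suffices to prove the reverse inclusion, namely that \emph{every} non-cut-vertex of $G$ is mandatory. Once this is done, the squeeze $V\setminus Cut(G)\subseteq Man(G)\subseteq M\subseteq V\setminus Cut(G)$ (for any minimum MEG set $M$) forces $M=Man(G)=V\setminus Cut(G)$, so this set is the unique minimum MEG set and $meg(G)=|V\setminus Cut(G)|$; since the cut-vertices and a strong ordering are both computable in polynomial time, the complexity claim follows at once. To establish mandatoriness I will rely on Theorem~\ref{th:mandatory}: for a fixed vertex $v$ I must exhibit a \emph{single} neighbour $u\in N(v)$ such that every induced $2$-path $uvx$ lies on a $4$-cycle, and I will produce the required common neighbour of $u$ and $x$ by repeatedly invoking the exchange property built into the definition of a strong ordering. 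I split the non-cut-vertices into extreme and interim ones; by the symmetry of the strong ordering (reversing an order, or swapping the roles of $X$ and $Y$) it is enough to treat one representative of each type, say $x_1$ and an interim vertex $x_i$.

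For the extreme vertex $x_1$: if $\deg(x_1)=1$ it is a pendant vertex and hence mandatory by Observation~\ref{observation:1}, so assume $\deg(x_1)\ge 2$ and choose $u=f(x_1)$, the smallest neighbour of $x_1$. First I observe that $u$ cannot be pendant, since $N(u)=\{x_1\}$ would let $G\setminus\{x_1\}$ isolate $u$, making $x_1$ a cut-vertex, contrary to assumption; hence $l(u)\neq x_1$. Every induced $2$-path through $x_1$ starting at $u$ has the form $u\,x_1\,y_j$ with $u<y_j$ (as $u$ is the smallest neighbour of $x_1$), and applying the exchange property to the edges $x_1y_j$ and $l(u)u$, using $x_1<l(u)$ and $u<y_j$, yields the edge $l(u)y_j$. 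Thus $u\,x_1\,y_j\,l(u)$ is a $4$-cycle, so $x_1$ is mandatory.

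For the interim vertex $x_i$: since $x_i$ is not a cut-vertex, Lemma~\ref{lem:bip_per} supplies $u_0\in N(x_i)$ with $f(u_0)<x_i<l(u_0)$, that is, $u_0$ has a neighbour $x_a$ with $a<i$ and a neighbour $x_b$ with $b>i$. I will take $u=u_0$. Let $y_j$ be any other neighbour of $x_i$; the path $u_0\,x_i\,y_j$ is automatically induced since both ends lie in $Y$. If $u_0<y_j$ I apply the exchange property to the edges $x_iy_j$ and $x_bu_0$ (using $x_i<x_b$ and $u_0<y_j$) to obtain $x_by_j\in E$, giving the $4$-cycle $u_0\,x_i\,y_j\,x_b$; if $y_j<u_0$ I symmetrically use $x_a$ to obtain $x_ay_j\in E$ and the $4$-cycle $u_0\,x_i\,y_j\,x_a$. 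In both cases the witnessing common neighbour ($x_b$ or $x_a$) differs from $x_i$, so every induced $2$-path $u_0\,x_i\,y_j$ lies on a $4$-cycle and $x_i$ is mandatory.

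The main obstacle is conceptual rather than computational: Theorem~\ref{th:mandatory} requires one fixed neighbour $u$ that works \emph{simultaneously} for all choices of $x$, so the whole argument hinges on selecting $u$ cleverly ($f(x_1)$ in the extreme case, and the $u_0$ furnished by Lemma~\ref{lem:bip_per} in the interim case) and then verifying the $4$-cycle condition uniformly. The remaining delicate points are the degenerate cases that must be excluded so that the witnessing common neighbour is genuinely distinct from the vertex under consideration — namely ruling out a pendant neighbour of $x_1$ (which would make $x_1$ a cut-vertex) and confirming $x_a,x_b\neq x_i$ — together with checking that the split $u_0<y_j$ versus $y_j<u_0$ is exhaustive, which it is because $u_0$ and $y_j$ are distinct vertices of $Y$.
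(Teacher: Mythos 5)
Your proposal is correct and takes essentially the same approach as the paper: both prove that every non-cut-vertex is mandatory via Theorem~\ref{th:mandatory}, splitting into extreme and interim vertices, invoking Lemma~\ref{lem:bip_per} for the interim case, and producing the required $4$-cycles from the strong-ordering exchange property. The only cosmetic differences are that for extreme vertices you argue directly with $u=f(x_1)$ and the witness $l(u)$ (where the paper shortcuts via a pendant/dominated-neighbourhood observation), and for interim vertices your $4$-cycle witnesses are $f(u_0)$ and $l(u_0)$ where the paper uses $x_{i-1}$ and $x_{i+1}$.
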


    \begin{proof}
        We show that every non-cut-vertex is a mandatory vertex. Note that if an extreme vertex $u\in V(G)$ is not a cut-vertex, then it is either a pendant vertex or there exists $u'$ such that $N(u)\subseteq N(u')$. In both cases, $u$ is a mandatory vertex.
        
        \smallskip
        \noindent Now, we consider an interim vertex $x_i\in X$ (without loss of generality), which is not a cut-vertex. By Lemma \ref{lem:bip_per}, there exists $y_j\in N(x_i)$ such that $f(y_j)<x_i$ and $l(y_j)>x_i$. Note that for any induced $2$-path $y_jx_iy_k$, two cases can occur.

        \smallskip
        \noindent\textbf{Case 1}: $y_k<y_j$. In this case $x_{i-1}\in N(y_k)\cap N(y_j)$. This implies that $y_jx_iy_k$ is part of a $4$-cycle $y_jx_iy_kx_{i-1}y_j$.

        \smallskip
        \noindent\textbf{Case 2}: $y_k>y_j$. In this case $x_{i+1}\in N(y_k)\cap N(y_j)$. This implies that $y_jx_iy_k$ is part of a $4$-cycle $y_jx_iy_kx_{i+1}y_j$.

        \smallskip
        \noindent Hence $x_i$ is a mandatory vertex, by Theorem \ref{th:mandatory}. The same can be shown for an interim vertex $y_i\in Y$ which is not a cut-vertex. Hence $Man(G)=V(G)\setminus Cut(G)$.
    \end{proof}

\section{MEG for strongly chordal graphs}
\label{sec:strong_chordal}
In this section, we consider that $G=(V,E)$ is a strongly chordal graph. We use the definition of strongly chordal graphs defined in \cite{DBLP:journals/dam/Farber84}. We define a vertex $v \in V$ as \emph{simple} if the vertices within its closed neighborhood can be arranged as follows: $N_G[v] = {v_1, v_2, \ldots , v_r}$, where $v_1 = v$ and $N_G[v_i] \subseteq N_G[v_j]$ for $1 \leq i \leq j \leq r$. A graph $G$ is strongly chordal if every induced subgraph of $G$ contains a simple vertex. An ordering $\alpha = (v_1, v_2,\ldots, v_n)$ of vertices in $V$ is called a strong elimination ordering (SEO) if $v_i$ is a simple vertex in $G[v_i,v_{i+1},\ldots,v_n]$. A graph is strongly chordal if it admits an SEO. We show that the set of mandatory vertices forms an MEG set for all strongly chordal graphs.

\begin{theorem}
	For any strongly chordal graph $G$, $Man(G)$ forms an optimal MEG set of $G$, and thus MEG can be solved in polynomial time for this class of graphs.
\end{theorem}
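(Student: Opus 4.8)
The plan is to establish the two inequalities $|Man(G)| \le meg(G) \le |Man(G)|$. The first is immediate: by definition every MEG set contains $Man(G)$, so $meg(G) \ge |Man(G)|$. All the work is in the reverse inequality, for which I would show that $Man(G)$ is \emph{itself} an MEG set; since $Man(G)$ is contained in every MEG set, proving it monitors all edges makes it automatically a minimum one, and the polynomial-time claim then follows because membership in $Man(G)$ is testable by the local condition of Theorem~\ref{th:mandatory}. Concretely, I would fix a strong elimination ordering $\alpha=(v_1,\dots,v_n)$ and prove that every edge $uv$ is monitored by some pair of mandatory vertices.

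I would split the argument according to how many endpoints of $uv$ are mandatory. If both $u,v\in Man(G)$, then the pair $(u,v)$ already monitors $uv$, since the edge itself is the unique shortest $u$--$v$ path; so these edges are free. The real content is the case where at least one endpoint, say $v$, is non-mandatory. Here I would exploit Theorem~\ref{th:mandatory} in contrapositive form: since $v\notin Man(G)$, for every neighbour $w$ of $v$ (in particular for $w=u$) there is an induced $2$-path $wvx$ lying in no $4$-cycle, i.e. $u$ and $x$ have no common neighbour other than $v$. The aim is to use such witnesses together with the nested closed-neighbourhood structure guaranteed by $\alpha$ (each $v_i$ being \emph{simple} in $G[v_i,\dots,v_n]$) to locate two mandatory ``anchors'' $a$ and $b$ -- typically simplicial vertices obtained by descending along $\alpha$ on each side of the edge -- whose unique shortest path runs $a\cdots u\,v\cdots b$ and hence traverses $uv$.

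Two small examples guide the choice of anchors. In a fan with hub $v$, each spoke $vp_i$ is monitored by a pair $(p_i,p_j)$ of rim vertices chosen far enough apart that their only shortest path is forced through the hub; this shows that the non-mandatory endpoint often sits \emph{in the middle} of the monitoring geodesic, so $a$ and $b$ may have to be pushed well beyond the edge. The diamond $K_4$ minus an edge shows that its two simplicial degree-$2$ vertices admit two parallel geodesics, so simplicial vertices alone need not form an MEG set; the argument must therefore use the \emph{full} set $Man(G)$, including non-simplicial mandatory vertices arising from twins (Theorem~\ref{th:man_twin}) and from nested neighbourhoods (Observation~\ref{observation:1}). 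Degenerate pendant branches would be absorbed using Theorem~\ref{th:man_pendant}.

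The hard part will be the forcing step: guaranteeing that the shortest path between the chosen anchors cannot bypass $uv$ through a parallel route. This is exactly where strong chordality is indispensable, since in a general chordal graph equidistant detours around an edge can occur. The intended mechanism is to show, via the strong elimination ordering, that whenever two neighbours of $v$ would yield equidistant alternatives to $uv$, the sun-free / nested-neighbourhood structure either merges them (by inclusion of closed neighbourhoods) or already places them in a common $4$-cycle, contradicting the non-mandatoriness witness for $v$. I expect the careful bookkeeping of distances along $\alpha$, and in particular the subcase where \emph{both} endpoints of $uv$ are non-mandatory -- forcing the anchors past both ends simultaneously -- to be the most delicate part of the proof.
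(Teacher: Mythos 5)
Your framework is sound as far as it goes: $meg(G)\ge|Man(G)|$ is immediate, the both-endpoints-mandatory case is correctly dispatched (an edge is trivially monitored by its two endpoints), and you correctly identify both that strong chordality (not mere chordality) must enter and that the whole difficulty lies in edges with a non-mandatory endpoint. But the proposal stops exactly where the theorem begins: the ``forcing step'' you defer -- exhibiting two mandatory anchors $a,b$ and proving that \emph{every} shortest $a$--$b$ path traverses $uv$ -- is the entire mathematical content of the result, and your sketch gives no mechanism for producing such anchors. ``Simplicial vertices obtained by descending along $\alpha$ on each side of the edge'' is not a construction: a non-mandatory vertex $v$ can be arbitrarily far (in the ordering and in the graph) from any simplicial vertex, and your own fan example shows the anchors may need to be pushed an unbounded distance past the edge, so there is no local rule in sight that selects them. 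As written, this is a statement of the problem together with a correct guess about which hypotheses will be needed, not a proof.

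It is worth contrasting with how the paper closes this gap, because the route is structurally different from what you propose. The paper does not argue globally in $G$; it inducts on $n$ along the strong elimination ordering, deleting the simple vertex $v_1$, whose neighbours satisfy $N[v_{i_1}]\subseteq\cdots\subseteq N[v_{i_l}]$. It first proves a sandwich claim, $Man(G_k)\setminus\{v_{i_l}\}\subseteq Man(G_{k+1})\subseteq Man(G_k)\cup\{v_1,v_{i_l}\}$, so that passing to the subgraph perturbs the mandatory set in at most two vertices. The induction hypothesis then hands over a complete MEG set of $G_k$, and the only work left is to re-monitor (i) the edges incident to $v_1$ and (ii) the edges that were monitored in $G_k$ \emph{only} through $v_{i_l}$, in the case where $v_{i_l}$ ceases to be mandatory. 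This is where your ``parallel route'' worry is actually resolved: the replacement monitoring pairs are not simplicial anchors but vertices $v',x'$ inherited from the induction hypothesis, and the proof that no shortest $v'$--$x'$ path bypasses the edge proceeds by assembling a cycle from two shortest paths, forcing a chord at $v_{i_l}$ via chordality, pinning that chord down uniquely via the monitoring hypotheses, and then deriving a $4$-cycle through the witness path $vv_{i_l}x$ -- contradicting non-mandatoriness of $v_{i_l}$ by Theorem~\ref{th:mandatory}. The lesson is that the induction is not a convenience but the device that makes the anchors available at all; if you want to salvage a direct, non-inductive argument, you would have to reconstruct a substitute for that inherited monitoring structure, which your current plan does not supply.
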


\begin{proof} 
	We show this by induction on the number of vertices in the graph, $n$. For $n=3$, the strongly chordal graphs are $P_3$ and $C_3$; the statement is true for both of these graphs. Hence, the statement is true for base cases. Now, let the statement be true for $n=k$. For $n=k+1$, let us consider a strongly chordal graph $G_{k+1}$ with SEO $v_1<v_2<\ldots<v_{k+1}$. Now define $G_k:=G_{k+1}[v_2,v_3,\ldots,v_{k+1}]$. Let us consider the neighbourhood of $v_1$ as $N(v_1)=\{v_{i_1},v_{i_2},\ldots,v_{i_l}\}$, where $N[v_{i_1}]\subseteq N[v_{i_2}]\subseteq\ldots \subseteq N[v_{i_l}]$. First, we show the following claims.
	
	\begin{claim}\label{Claim_5.1}
		$Man(G_{k})\setminus \{v_{i_l}\}\subseteq Man(G_{k+1})\subseteq Man(G_k)\cup\{v_1,v_{i_l}\}$.
	\end{claim}
	
	\begin{proof}
		Note that by Observation~\ref{observation:1},  $N[v_1]\setminus \{v_{i_l}\}\subseteq Man(G_{k+1})$ as for every $v\in N[v_1]\setminus \{v_{i_l}\}$, $N[v]\subseteq N[v_{i_l}]$. Now, $V(G_{k+1})$ can be partitioned as $V(G_{k+1})=N[v_1]\cup V'$, where $V'$ is the set of all non neighbours of $v_1$. Let us consider a vertex $v\in V'$. Two cases can appear.
		
	\smallskip
    \noindent \textbf{Case 5.1.1}: $v\in Man(G_k)$
	
    \smallskip
    \noindent This implies that a vertex $u\in N_{G_k}(v)$ exists such that every induced $2$-path $uvx$ is part of a $4$-cycle. Now, after adding $v_1$ to the vertex set, the neighbour set of $v$ has not expanded, as $v$ is not a neighbour of $v_1$. This implies that every induced $2$-path $uvx$ is still part of a $4$-cycle. This implies $v\in Man(G_{k+1})$. 
	
	\smallskip
    \noindent \textbf{Case 5.1.2}: $v\notin Man(G_k)$ 
	
    \smallskip
    \noindent For the sake of contradiction, let $v \in Man(G_{k+1})$. This implies that there exists $u\in N_{G_{k+1}}(v)$ such that every induced $2$-path $uvx$ is part of a $4$-cycle $uvxx'$. Note that $u\neq v_1$, as $v$ is not adjacent to $v_1$. But since $v$ was not a mandatory vertex in $G_k$, for this $u$, there must exist an induced $2$-path $uvx$ in $G_k$, which is not part of any $4$-cycle in $G_k$. This implies that the $4$-cycle $uvxx'$ in $G_{k+1}$, of which $uvx$ is part of, must include $v_1$. This implies that $x'=v_1$ and $u,x\in N(v_1)$. Hence, there is an edge between $u$ and $x$ (refer to Figure \ref{fig:claim1.1}), as the graph induced on $N[v_1]$ is a clique. So, $uvx$ is not an induced $2$-path, which leads to a contradiction. Hence,  $v \notin Man(G_{k+1})$.

      \begin{figure}[h!]
        \centering
        \includegraphics[scale=0.45]{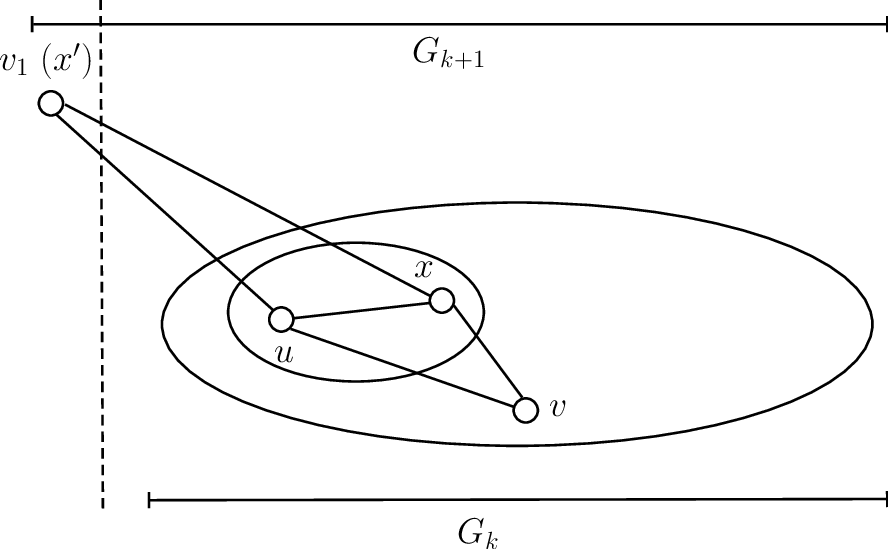}
        \caption{$v$ is a mandatory in $G_{k+1}$ but not in $G_k$}
        \label{fig:claim1.1}
    \end{figure}
	
	This implies $Man(G_k)\cap V'=Man(G_{k+1})\cap V'$ and $N(v_1)\setminus\{v_{i_l}\} \subseteq Man(G_k)$ and $N[v_1]\setminus\{v_{i_l}\} \subseteq Man(G_{k+1})$. Hence, the claim follows.
	\end{proof}

\begin{claim}
    If $v_1$ is a pendant vertex, then $Man(G_{k+1})$ is an optimal MEG set of $G$.
\end{claim}
\begin{proof}
    Note that if $v_1$ is a pendant vertex in $G_{k+1}$, then $N(v_1)=\{v_{i_1}\}$. This implies that $Man(G_{k+1})=(Man(G_k)\setminus\{v_{i_1}\})\cup \{v_1\}$. By the induction hypothesis, $Man(G_k)$ is an MEG set of $G_k$. If $v_{i_1}\in Man(G_k)$, then there exists an edge $uv \in E(G_k)$ that is only being monitored by $v_{i_1}$ and $v'$ for some $v'\in Man(G_k)$; such that no other pair in $Man(G_k)$ monitors $uv$. Evidently, $uv$ and $v_1v_{i_1}$ can both be monitored by $v_1,v'$ in $G_{k+1}$ (refer to Figure \ref{pendant_1}).

     \begin{figure}[h!]
    \centering
    \begin{subfigure}{0.4\textwidth}
        \centering
        \includegraphics[width=1\linewidth]{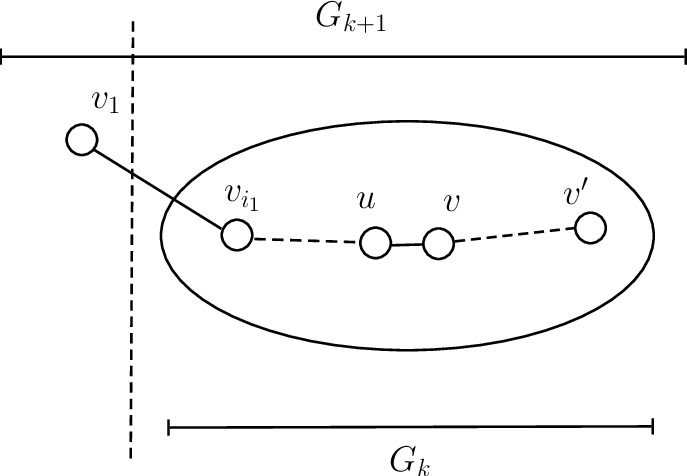}
    \caption{The pair $v_1,v'$ monitors both the edge $v_1v_{i_1}$ and $uv$}
    \label{pendant_1}
    \end{subfigure}
\hspace{0.5in}
    \begin{subfigure}{0.36\textwidth}
    \centering
        \includegraphics[width=1\linewidth]{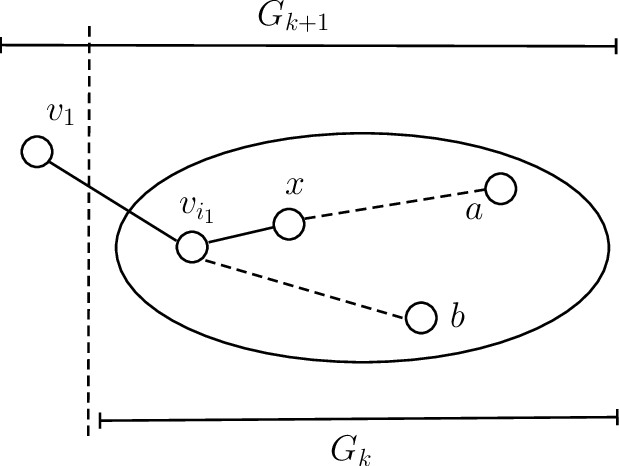}
    \caption{The pairs $v_1,a$ and $v_1,b$ both monitors the edge $v_{i_1}v_1$}
    \label{subfig:2}
    \end{subfigure}
    \caption{$v_1$ is a pendant vertex and $v_{i_1}$ is the stem}
%\label{pendant_2}
\end{figure}

     If $v_{i_1}\notin Man(G_k)$, then all the edges in $G_{k+1}$ except $v_1v_{i_1}$ are being monitored by some pair of vertices in $Man(G_k)$. The only edge that remains to be monitored is $v_1v_{i_1}$. Consider any edge of the form $v_{i_1}x$ in $G_k$. It is monitored by some $a,b\in Man(G_k)$. Hence, it is not hard to see that either the pair $a,v_1$ or the pair $b,v_1$ monitors the edge $v_1v_{i_1}$ (refer to Figure \ref{subfig:2}). Hence in both of the cases, $meg(G_{k+1})=\vert Man(G_{k+1})\vert$.
\end{proof}
 
\medskip
\noindent So, from now on, let us assume that $\vert N_{G_{k+1}}(v_1)\vert\geq 2$, that is $v_1$ is not a pendant vertex in $G_{k+1}$. 

	\begin{claim}\label{Claim_5.2}
		If $v_{i_l}\in Man(G_{k+1})$, then $v_{i_l}\in Man(G_{k})$.
	\end{claim}
	
	\begin{proof}
	 For the sake of contradiction, let us assume that $v_{i_l}\in Man(G_{k+1})$ but $v_{i_l}\notin Man(G_{k})$. Then for every $u\in N_{G_k}(v_{i_l})$, there exists an induced $2$-path $uv_{i_l}x$ in $G_k$, which is not part of any $4$-cycle in $G_k$. But since, $v_{i_l}\in Man(G_{k+1})$, there exists $u_i\in N_{G_{k+1}}(v_{i_l})$ such that every induced $2$-path $u_iv_{i_l}x$ is part of a $4$-cycle $u_iv_{i_l}xx'$ in $G_{k+1}$. This $4$-cycle was clearly not present in $G_{k}$, which implies that either $u_i=v_1$ or $x'=v_1$. If $u_i=v_1$, then for every neighbour $x$ of $v_{i_l}$, $x$ is also adjacent to a vertex $x'$ in $N(v_1)\setminus \{v_{i_l}\}$ (refer to Figure \ref{fig:claim2.1}). This implies $x\in N[v_{i_{l-1}}]$, hence $N[v_{i_{l-1}}]= N[v_{i_l}]$, which implies that $v_{i_l}\in Man(G_k)$, by Theorem \ref{th:man_twin}, leading to a contradiction. If $x'=v_1$, then $u_i,x\in N(v_1)$. This implies that an edge exists from $u_i$ to $x$, as the graph induced on $N[v_1]$ is a clique, which contradicts the fact that $u_ivx$ is an induced $2$-path. Hence, $v_{i_l}\in Man(G_{k})$.
	\end{proof}

 \begin{figure}[h!]
        \centering
        \includegraphics[scale=0.45]{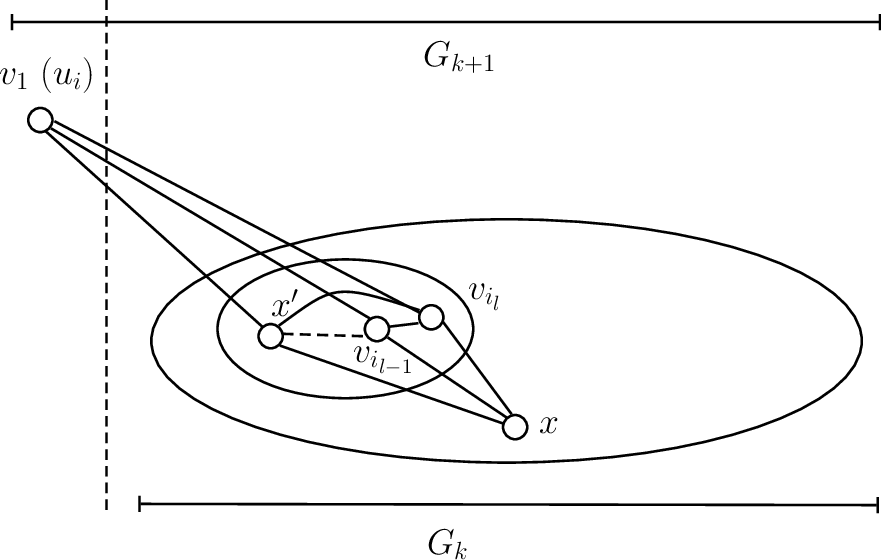}
        \caption{$u_i=v_1$ implies that $N[v_{i_{l-1}}]= N[v_{i_l}]$}
        \label{fig:claim2.1}
    \end{figure}
	
	Claim \ref{Claim_5.2} implies that if the vertex $v_{i_l}$ is mandatory in $G_{k+1}$, then it was also mandatory in $G_k$. Two cases may appear for $v_{i_l}$. The rest of the proof proceeds as follows:
    \begin{itemize}
        \item In Case 1, we consider $v_{i_l}\in Man(G_{k+1})$ and show that $Man(G_{k+1})$ forms an MEG set.
        \item In Case 2, we consider $v_{i_l}\notin Man(G_{k+1})$. Here again, two subcases may appear, $v_{i_l}\notin Man(G_{k})$ (Case 2.1) and $v_{i_l}\in Man(G_{k})$ (Case 2.2). In both of the subcases, we show that $Man(G_{k+1})$ forms an MEG set.
    \end{itemize}
 
	\noindent \textbf{Case 1}: $v_{i_l}\in Man(G_{k+1})$\\
	By Claim \ref{Claim_5.2}, $v_{i_l}\in Man(G_{k})$. In this case, $Man(G_{k+1})=Man(G_k)\cup\{v_1\}$. Now, by induction hypothesis, $Man(G_k)$ is an MEG set of $G_k$. Next we prove the following claim:
    \begin{claim}
        $v_1$ does not belong to any shortest $x-y$ path, where $x,y\in V(G_k)$.
    \end{claim}

    \begin{proof}
        Clearly, none of the $x,y$ is $v_1$ as they both belong to $V(G_k)$. For the sake of contradiction, let $P=((x=)x_1x_2\ldots x_k(=y))$ is a shortest $x-y$ path which contains $v_1$. Let $x_i=v_1$ for some $1<i<k$. Then the vertices $x_{i-1},~x_{i+1}$ are adjacent in $G_k$ (and in $G_{k+1}$ as well), as $N[v_1]$ induces a clique. This implies that $x_1x_2\ldots x_{i-1}x_{i+1}\ldots x_k$ is a path between $x$ and $y$ which has smaller length than $P$, which contradicts the fact that $P$ is a shortest $x-y$ path. Hence  $v_1$ does not belong to any shortest $x-y$ path.
    \end{proof}

    \medskip
    \noindent Hence, all the edges in $G_{k+1}$ that do not have $v_1$ as an endpoint are being monitored by some pair of vertices in $Man(G_k)$. Now since $N[v_1]\subseteq Man(G_{k+1})$, then all the edges of the form $v_1x$ are being monitored by $v_1$ and $x$, as $x\in N[v_1]\subseteq Man(G_{k+1})$. Hence, $Man(G_{k+1})$ is an MEG set of $G_{k+1}$.

    \medskip
    \noindent \textbf{Case 2}: $v_{i_l}\notin Man(G_{k+1})$\\
In this case, either $v_{i_l}\in Man(G_{k})$ or $v_{i_l}\notin Man(G_{k})$. 

\medskip
\noindent \textbf{Case 2.1}: $v_{i_l}\notin Man(G_{k})$

\smallskip
\noindent If $v_{i_l}\notin Man(G_{k})$, then all the edges in $E(G_{k+1})$ which do not have $v_1$ as an endpoint are monitored by some pair in $Man(G_k)$. All the edges of the form $v_1v_{i_j}$ are monitored by $v_1$, $v_{i_j}$, for every $1\leq j\leq l-1$. Now, the edge that is yet to be monitored is $v_1v_{i_l}$. Note that the edge  $v_{i_{l-1}}v_{i_l}$ is being monitored by $v_{i_{l-1}}$ and $v'$ for some $v'\in Man(G_k)$. This implies that every shortest path between $v_{i_{l-1}}$ and $v'$ contains the edge $v_{i_{l-1}}v_{i_l}$. Hence, every shortest $v_1-v'$ path also contains the edge $v_1v_{i_l}$, if not then there exists a shortest $v_{i_{l-1}}-v'$ path that bypasses the edge $v_{i_{l-1}}v_{i_l}$, which leads to a contradiction (refer to Figure \ref{fig:claim2.2}). Hence, $Man(G_{k+1})$ forms an MEG set of $G_{k+1}$.

 \begin{figure}[h!]
        \centering
        \includegraphics[scale=0.45]{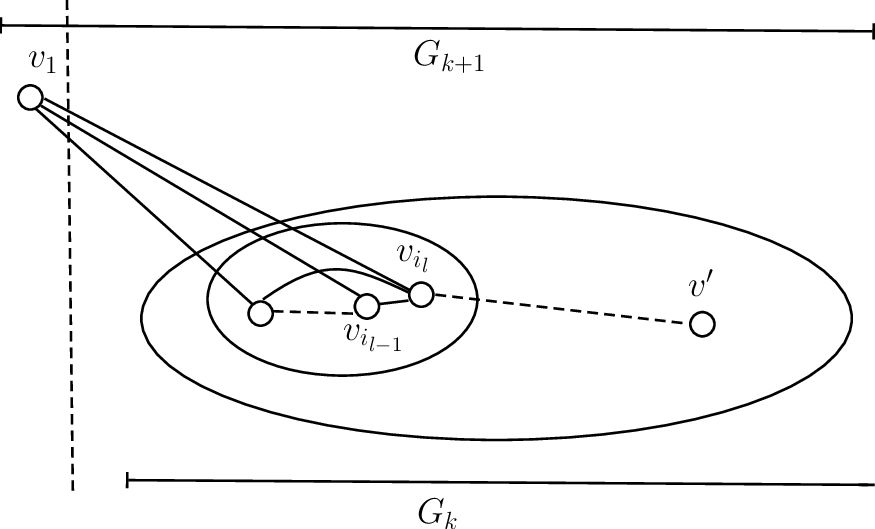}
        \caption{The pair $v_1, v'$ monitors the edge $v_1v_{i_1}$}
        \label{fig:claim2.2}
    \end{figure}
   
\noindent \textbf{Case 2.2}: $v_{i_l}\in Man(G_{k})$
    
   \smallskip
   \noindent Let $v_{i_l}\in Man(G_{k})$. Hence $Man(G_{k+1})=(Man(G_k)\setminus \{v_{i_l}\})\cup \{v_1\}$. Now define $E_l$ as the set of edges in $G_k$ which were only being monitored by $v_{i_l},v'$, for some $v'\in Man(G_k)$ and no other pairs in $Man(G_k)$. Note that every edge in $E(G_{k+1})\setminus E_l$ is being monitored by some pair in $Man(G_{k+1})$. Hence, it is sufficient to prove that every edge in $E_l$ can be monitored by some pair in $Man(G_{k+1})$. Consider an edge $e\in E_l$.

    \medskip
    \noindent \textbf{Case 2.2.1}: $v_{i_l}$ is not an endpoint of $e$.
    
    \smallskip
    \noindent If $v_{i_l}$ is not an endpoint of $e$, then in $G_k$, $e$ was monitored by $v_{i_l},v'\in Man(G_k)$. Note that $v'\notin N[v_1]$. Every shortest $v_{i_l}-v'$ path contains the edge $e$. Next we show that every shortest $v_1-v'$ path also contains $e$ (refer to Figure \ref{fig:e_non}).

    \begin{claim}
         Every shortest $v_1-v'$ path also contains $e$.
    \end{claim}
    
    \begin{proof}
        Let there exists a shortest $v_1-v'$ path $P=((v_1=)x_1x_2\ldots x_k(=v'))$ that does not contain $e$. Note that $v_{i_l}$ does not belong to the path (as it will contradict the fact that every shortest $v_{i_l}-v'$-path contains the edge $e$). Without loss of generality, let us assume that $x_2=v_{i_{l-1}}$. Note that $d(v_1,v')=1+d(v_{i_l},v')$, or $v_{i_l}x_2\ldots x_k(=v')$ is a shortest $v_{i_l}-v'$ path that does not contain $e$, leading to a contradiction. This implies that $d(v_{i_l},v')=d(v_{i_{l-1}},v')$. 
        
         \smallskip
    \noindent Hence, $v_{i_{l-1}}x_3\ldots x_k$ is a shortest $v_{i_{l-1}}-v'$ path that bypasses $e$. Since $N[v_{i_{l-1}}]\subseteq N[v_{i_l}]$, $x_3$ is also adjacent to $v_{i_l}$, hence $v_{i_l}x_3\ldots x_k$ is a shortest $v_{i_l}-v'$ path that avoids $e$, which is a contradiction to the fact that $v_{i_l},v'$ monitor $e$. Hence, every shortest $v_1-v'$ path also contains $e$.     
    \end{proof}

    \begin{figure}[h!]
        \centering
        \includegraphics[scale=0.45]{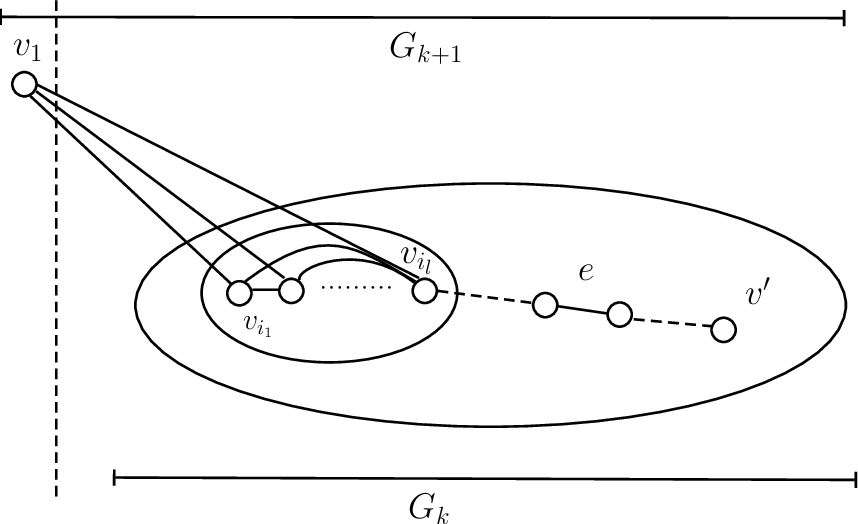}
        \caption{$e$ is not incident to $v_{i_l}$}
        \label{fig:e_non}
    \end{figure}

    \noindent \textbf{Case 2.2.2}: $v_{i_l}$ is an endpoint of $e$.
    
    \smallskip
    \noindent If $v_{i_l}$ is an endpoint of $e$, then let the other endpoint be $v$. Note that in $G_k$, the edge $v_{i_l}v$ was monitored by $v_{i_l}$ and $v'$ for some $v'$. 
    
    \smallskip
    \noindent At first let us consider the case $v\notin N_{G_{k+1}}(v_{i_{l-1}})$; such $v$ exists as $N[v_{i_{l-1}}]\subset N[v_{i_l}]$ (otherwise $v_{i_{l-1}},~v_{i_l}$ are twins, hence $v_{i_l}$ will be a mandatory vertex of $G_{k+1}$). Now note that $d_{G_{k+1}}(v,v_1)=2$ and $vv_{i_l}v_1$ is the unique shortest path from $v_1$ to $v$. Hence, the edge $v_{i_l}v$ is contained in every shortest $v_1-v'$ path (refer to Figure \ref{fig:claim_1_unotin}). Note that the edges $v_1v_{i_l}$ and $v_{i_{l-1}}v_{i_l}$ are also monitored by the pairs $v_1,v'$ and $v_{i_{l-1}},v'$, respectively.

    \begin{figure}[h!]
        \centering
        \includegraphics[scale= 0.45]{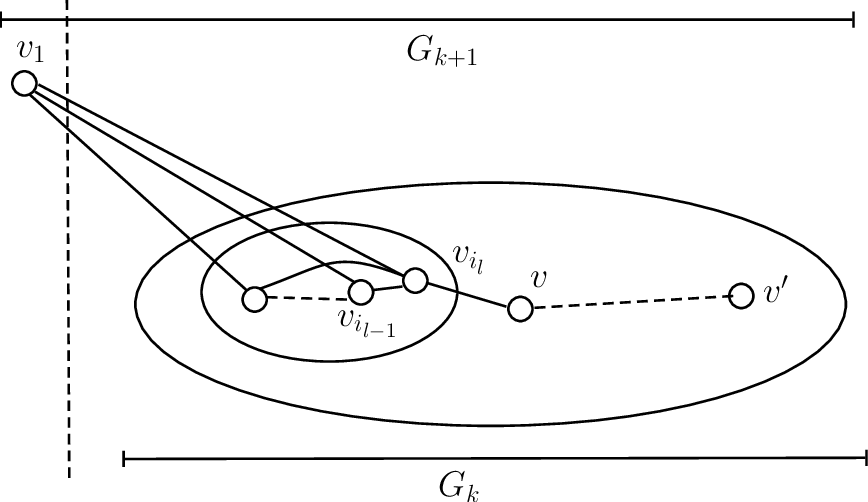}
        \caption{$v_{i_l}v$ is being monitored by $v_1$ and $v'$}
        \label{fig:claim_1_unotin}
    \end{figure}

    \smallskip
    \noindent Let $v\in N_{G_{k+1}}(v_{i_{l-1}})$. Note that there exists $x\in N[v_{i_{l}}]\setminus N[v_{i_{l-1}}]$ such that the induced $2$-path $vv_{i_l}x$ is part of no $4$-cycle. If not, then by Theorem \ref{th:mandatory} $v_{i_l}$ would be mandatory in $G_{k+1}$ as well. Now the edges $v_{i_l}v$ and $v_{i_l}x$ are being monitored by $v_{i_l},v'$ and $v_{i_l},x'$, respectively in $G_k$ (refer to Figure \ref{fig:claim_1_unotin_1}). We also consider $v'$ (and $x'$) to be the minimum distance mandatory vertex from $v_{i_l}$ such that $v_{i_l}$ and $v'$ ($v_{i_l}$ and $x'$) monitor the edge $v_{i_l}v$ ($v_{i_l}x$). Note that $v'\neq x'$; since if $v'=x'$, then every shortest $v_{i_l}-v'$ path contains both of the edges $v_{i_l}v$ and $v_{i_l}x$, which is a contradiction. 

      \begin{figure}[h!]
        \centering
        \includegraphics[scale=0.45]{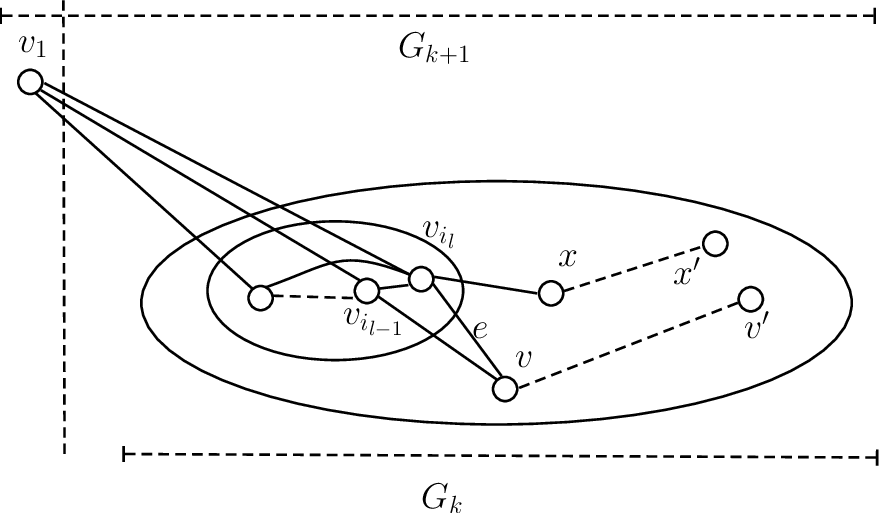}
        \caption{$e$ is incident to $v_{l_1}$}
        \label{fig:claim_1_unotin_1}
    \end{figure}

    \smallskip
    \noindent We show that $v',x'$ monitor the edge $v_{i_l}v$ in $G_{k+1}$. Now, for the sake of contradiction, assume that there exists a shortest $v'-x'$ path $P$ that bypasses the edge $v_{i_l}v$ (refer to Figure \ref{fig:main_proof_1}).

    \begin{figure}[h!]
    \centering
    \begin{subfigure}{0.41\textwidth}
        \centering
        \includegraphics[width=1\linewidth]{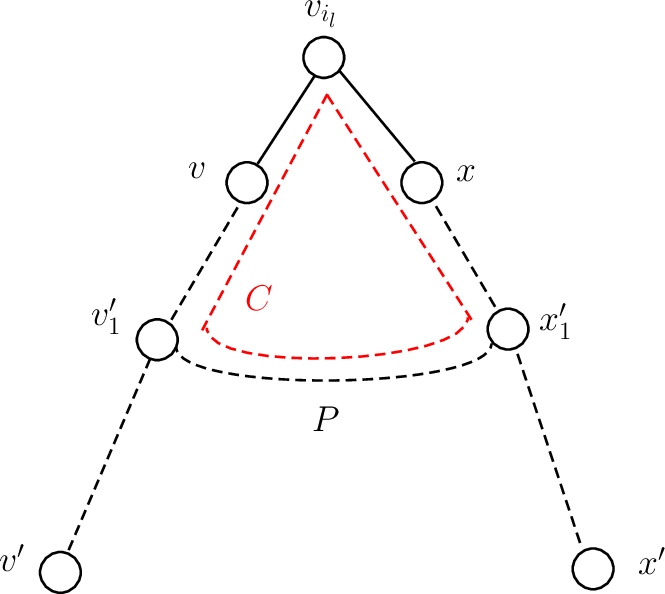}
    \caption{Another shortest path $P$ from $v'$ to $x'$}
    \label{fig:main_proof_1}
    \end{subfigure}
\hspace{0.5in}
    \begin{subfigure}{0.41\textwidth}
    \centering
        \includegraphics[width=1\linewidth]{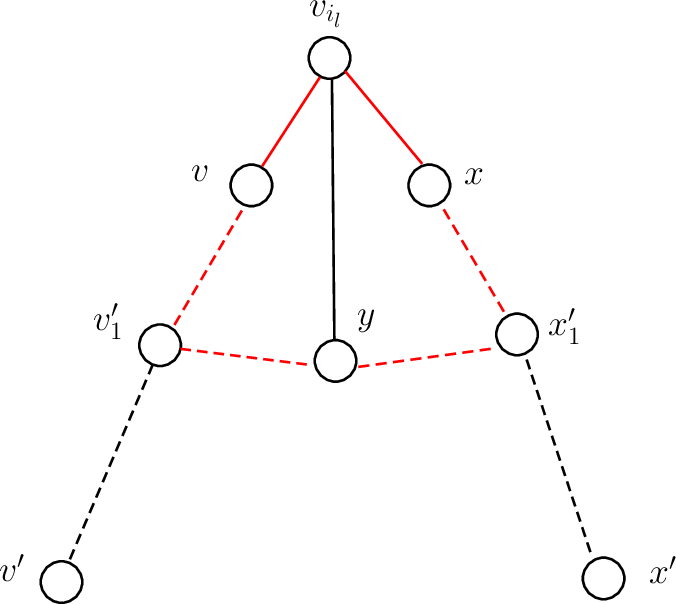}
    \caption{$v_{i_l}$ is incident to exactly one chord in $C$}
    \label{fig:main_proof_2}
    \end{subfigure}
    \caption{The cycle $C$ created by two distinct paths from $v'$ to $x'$}
%\label{pendant_2}
\end{figure}

    \smallskip
    \noindent Let $v_1'$ (and $x_1'$) be the nearest vertex form $v_{i_l}$ that is present in $P\cap P_{v_{i_l}v'}$ (and $P\cap P_{v_{i_l}x'}$) (refer to Figure \ref{fig:main_proof_1}), where $P_{a,b}$ is defined to be any shortest $a-b$ path. Since $P$ is a shortest path from $v'$ to $x'$, then $d_P(v'_1,x'_1)\leq d(v'_1,v_{i_l})+d(x'_1,v_{i_l})$. Let $C$ be the cycle which is constructed with the boundaries $P(v'_1,x'_1)$, $P_{x'_1v_{i_l}}$ and $P_{v_{i_l}v'_1}$ (Refer to Figure \ref{fig:main_proof_1}). Note that there must be at least one chord attached to $v_{i_l}$ in the cycle $C$. If not, since $vv_{i_l}x$ form an induced 2-path, then $vv_{i_l}x$ is part of a cycle of length at least $4$, which does not have any chord, leading to a contradiction. Hence, $v_{i_l}$ has at least one chord attached to it in $C$. Let $v_{i_l}y$ be one such chord; then we show that $y$ lies in $P(v'_1,x'_1)$ in the next claim.

    \begin{claim}\label{claim:chord}
        $y$ lies in $P(v'_1,x'_1)$.
    \end{claim}

    \begin{proof}
        If $y$ does not lie in  $P(v'_1,x'_1)$, then $y$ must lie in the path between $v_{i_l}$ and $x'_1$ (or the path between $v_{i_l}$ and $v'_1$). If it lies on the path between $v_{i_l}$ and $x'_1$ (resp. between $v_{i_l}$ and $v'_1$), then we have a path from $v_{i_l}$ to $x'$  (resp. between $v_{i_l}$ and $v'$) which has length smaller than $d(v_{i_l},x')$ (resp. $d(v_{i_l},v')$), which is a contradiction. Hence,  $y$ lies in $P(v'_1,x'_1)$ (refer to Figure \ref{fig:main_proof_2}).
    \end{proof}
    
    \noindent Note that $d(x'_1,y)\geq d(x'_1,v_{i_l})$ and $d(v'_1,y)\geq d(v'_1,v_{i_l})$. If not, then there exists a shorter or equal length path from $v'$ to $v_{i_l}$ (from $x'$ to $v_{i_l}$) that bypasses the edge $v_{i_l}v$ (resp. $v_{i_l}x$), which is a contradiction to the fact that the pair $v_{i_l},v'$ (resp. $v_{i_l},x'$) monitors the edge $v_{i_l}v$ (resp. $v_{i_l}x$). Hence $d_P(x'_1,v'_1)=d(x'_1,y)+d(y,v'_1)\geq d(v'_1,v_{i_l})+d(x'_1,v_{i_l})$, implying $d_P(x'_1,v'_1)= d(v'_1,v_{i_l})+d(x'_1,v_{i_l})$. This implies that $d(x'_1,y)=d(x'_1,v_{i_l})$ and $d(v'_1,y)=d(v'_1,v_{i_l})$, which implies that the vertex $y$ is unique in $P(v'_1,x'_1)$. This fact together with Claim~\ref{claim:chord} implies that in $C$, $v_{i_l}$ is incident to exactly one chord. 

    \smallskip
    \noindent Now note that if $v$ and $y$ are not adjacent, then the induced $2$-path $yv_{i_l}v$ is part of a chordless cycle of length at least $4$, which is a contradiction. Hence $vy\in E(G_{k+1})$. Similarly, it can be shown that $xy\in E(G_{k+1})$. Hence $vv_{i_l}x$ is part of a $4$-cycle $vv_{i_l}xyv$, which leads to a contradiction. Hence $v'$ and $x'$ monitor $vv_{i_l}$.

    \smallskip
    \noindent Hence, until now, we showed that all edges in $E_l$ can be monitored by the vertices of $Man(G_k)\setminus \{v_{i_l}\}$. Hence, combining this fact with Claim \ref{Claim_5.1} and \ref{Claim_5.2}, we can conclude that $Man(G_{k+1})$ forms an optimal MEG set of $G_{k+1}$. Hence, $meg(G)=\vert Man(G)\vert$ for every strongly chordal graph $G$.  
\end{proof}

\section{Conclusion and future aspects}
\label{sec:conclusion}
In this paper, we solved the complexity status of the MIN-MEG problem for some well-known graph classes. Next, it will be interesting to address the following questions:

\begin{question}
    For any bipartite permutation graph $G$, we showed that $Man(G)=V\setminus Cut(G)$, and the graph mentioned in~\cite[Figure 4]{DBLP:journals/corr/abs-2403-09122} (a path on six vertices with an added universal vertex, which is a permutation graph) shows that this is not always the case for permutation graphs. However, do we have $|Man(G)|=meg(G)$ when $G$ is a permutation graph?
\end{question}

\begin{question}
    For any strongly chordal graph $G$, we showed that $|Man(G)|=meg(G)$, and this was also shown to be true for well-partitioned chordal graphs in~\cite{DBLP:journals/corr/abs-2403-09122,DBLP:conf/caldam/FoucaudMMSST24}. Does this property hold for chordal graphs as well?
\end{question}

\begin{question}
    As can be seen in Figure~\ref{fig:diagram}, the complexity status of the MIN-MEG problem is open for graph classes such as circle graphs, permutation graphs, chordal graphs, cocomparability graphs, etc. Is the MIN-MEG problem efficiently solvable for these graph classes?
\end{question}

\bibliographystyle{abbrv}
\bibliography{refer}

\end{document}